\newtheorem{n}{}[section]
\theoremstyle{plain}
\newtheorem{corollary}[n]{Corollary}
\newtheorem{proposition}[n]{Proposition}
\newtheorem{theorem}[n]{Theorem}
\theoremstyle{definition}
\newtheorem{definition}[n]{Definition}
\newtheorem{algorithm}[n]{Algorithm}
\newtheorem{example}[n]{Example}
\newtheorem{notation}[n]{Notation}
\newtheorem{remark}[n]{Remark}
\newcommand{\1}{\mathbbm{1}}
\newcommand{\A}{\mathbb{A}}
\newcommand{\C}{\mathbb{C}}
\newcommand{\F}{\mathbb{F}}
\newcommand{\N}{\mathbb{N}}
\newcommand{\Q}{\mathbb{Q}}
\newcommand{\Z}{\mathbb{Z}}
\newcommand{\EE}{\mathcal{E}}
\newcommand{\FF}{\mathcal{F}}
\newcommand{\LL}{\mathcal{L}}
\newcommand{\OO}{\mathcal{O}}
\newcommand{\ff}{\mathfrak{f}}
\newcommand{\Alb}{\operatorname{Alb}}
\newcommand{\cc}{\operatorname{cc}}
\newcommand{\ch}{\operatorname{char}}
\renewcommand{\d}{\operatorname{d}\!}
\newcommand{\et}{\text{\'et}}
\newcommand{\Gal}{\operatorname{Gal}}
\newcommand{\GL}{\operatorname{GL}}
\newcommand{\id}{\operatorname{id}}
\newcommand{\Ind}{\operatorname{Ind}}
\newcommand{\NS}{\operatorname{NS}}
\newcommand{\ord}{\operatorname{ord}}
\newcommand{\tr}{\operatorname{tr}}
\title[Computing L-functions of global function fields]{Computing L-functions of $ \lambda $-adic representations of global function fields}
\author{David Kurniadi Angdinata}
\begin{document}

\begin{abstract}
The L-function $ L(\rho_\lambda, s) $ of an almost everywhere unramified $ \lambda $-adic representation $ \rho_\lambda $ of a global function field $ \F_q(C) $ is known to be a rational function in $ q^{-s} $ satisfying a functional equation up to some complex sign $ \epsilon(\rho_\lambda) $. This paper presents a systematic framework to compute the coefficients of $ L(\rho_\lambda, s) $ and its sign $ \epsilon(\rho_\lambda) $ with some explicit examples.

\noindent \textbf{Keywords:} L-functions, $ \lambda $-adic representations, global function fields

\noindent \textbf{Mathematics Subject Classification:} Primary 11Y40, 11R59, 11M38; Secondary 11G40, 14G10, 11M41
\end{abstract}

\maketitle

\section{Introduction}

Given a global field, there are L-functions $ L(\rho, s) $ in the complex variable $ s \in \C $ associated to suitable continuous representations $ \rho $ of its absolute Galois group, which are conjecturally meromophic over $ \C $ and conjecturally satisfy functional equations. These L-functions play the role of half of the modern Langlands philosophy, and there are many crucial algebraic and analytic questions that remain unanswered. For instance, when $ \rho = \chi $ is a Dirichlet character, the generalised Riemann hypothesis describes where its $ \zeta $-function $ \zeta(s) = L(\chi, s) $ vanishes, and when $ \rho = \rho_{A, \ell} $ arises from an abelian variety $ A $, the Birch and Swinnerton-Dyer conjecture describes the arithmetic of $ A $ in terms of its L-function $ L(A, s) = L(\rho_{A, \ell}, s) $. The importance of L-functions in arithmetic algebraic geometry is so much so that thousands of CPU years have been spent on computing their data for various representations in the massive L-functions and modular forms database (LMFDB) \cite{LMFDB}, which has proven to be an immensely useful resource for number theorists.

At a cursory glance, most of the data in the LMFDB are of objects over various number fields. In this setting, their L-functions are transcendental functions of $ s $ that can be written as Dirichlet series, with the aforementioned conjectures still open for most objects. Much of their data were computed from Platt's code for degree one L-functions \cite{Pla11} based on Booker's algorithm \cite{Boo06}, from the \texttt{lcalc} library in C++ based on Rubinstein's algorithm \cite{Rub05} for Maass forms, and from the work of Booker--Sijsling--Sutherland--Voight--Yasaki \cite{BSSVY16} for genus two curves. In a much larger generality, T Dokchitser wrote an efficient algorithm to compute the special value of a motivic L-function, assuming a conjectural functional equation that can be numerically verified \cite{Dok04}. This was originally implemented as the PARI \cite{PARI} package \texttt{ComputeL}, but has since been ported to the core libraries of Magma \cite{BCP97} as the \texttt{LSeries} function and SageMath \cite{SageMath} as the \texttt{dokchitser} function.

In a stark contrast, the arithmetic of analogous objects over $ K \coloneqq \F_q(C) $ is very well-understood \cite{Laf02}. In this setting, their L-functions are rational functions of $ q^{-s} $, satisfying a known functional equation and even the Riemann hypothesis. Yet, the computational ecosystem over $ K $ is severely limited, with little to no support for explicit computations in the core libraries of most computer algebra systems, including PARI and SageMath. Magma is better in this regard, including multiple constructors for $ K $, explicit class field theory computations with \texttt{WittRing} and \texttt{CarlitzModule}, and even an \texttt{LFunction} for non-constant elliptic curves over $ \F_q(t) $. Using this interface, Comeau--Lapointe--David--Lal\'in--Li computed $ L(E, \chi, s) = L(\rho_{E, \ell} \otimes \chi, s) $ for a fixed Legendre curve $ E $ over $ K = \F_q(t) $ twisted by Dirichlet characters $ \chi $ to investigate their vanishing \cite{CLDLL22}. In private communication, Maistret--Wiersema also computed $ L(E, \chi, s) $ for many constant elliptic curves $ E $ by hand to formulate an analogue over $ K $ of the twisted BSD-type formula of V Dokchitser--Evans--Wiersema \cite{DEW21}. This seems to be the extent to which L-functions of $ K $ have been computed explicitly in the literature, which renders the creation of an analogue of the LMFDB over $ K $ difficult.

This paper presents an analogue of T Dokchitser's algorithm for $ K $, in the language of \emph{$ \lambda $-adic representations $ \rho_\lambda $} for some prime $ \lambda \nmid q $ of a fixed number field $ F $. These are the natural Galois representations associated to L-functions of motivic origin, including Riemann/Dedekind $ \zeta $-functions, Dirichlet/Weber/Hecke/Artin L-functions, and Hasse--Weil L-functions of varieties. Then there is a rational function $ \LL(\rho_\lambda, T) \in F_\lambda(T) $, called the \emph{formal L-function} of $ \rho_\lambda $, such that $ \LL(\rho_\lambda, q^{-s}) = L(\rho_\lambda, s) $. Under the condition that $ \rho_\lambda $ is \emph{self-dual of weight $ w(\rho_\lambda) \in \N $ and sign $ c(\rho_\lambda) \in \{\id, \cc\} $}, where $ \id $ is the identity and $ \cc $ is complex conjugation, there is a field element $ \epsilon(\rho_\lambda) \in F_\lambda^\times $ such that $ \LL(\rho_\lambda, T) $ satisfies a functional equation
$$ \LL(\rho_\lambda, T) = \epsilon(\rho_\lambda) \cdot T^{n(\rho_\lambda) - d(\rho_\lambda)} \cdot \LL(\rho_\lambda, (q^{w(\rho_\lambda) + 1}T)^{-1})^{c(\rho_\lambda)}, $$
where $ n(\rho_\lambda), d(\rho_\lambda) \in \N $ are the degrees of the numerator and denominator of $ \LL(\rho_\lambda, T) $ respectively. If the genus of $ C $ and the dimension of $ \rho_\lambda $ are small, then $ n(\rho_\lambda) - d(\rho_\lambda) $ is essentially the degree of the \emph{Artin conductor} of $ \rho_\lambda $.

\begin{theorem}[Corollary \ref{cor:rationalfunction}, Corollary \ref{cor:functionalequation}, Corollary \ref{cor:computeepsilon}]
\label{thm:mainresults}
Let $ \rho_\lambda $ be an almost everywhere unramified $ \lambda $-adic representation of a global function field $ K $.
\begin{itemize}
\item[(1)] There is an algorithm to compute $ \LL(\rho_\lambda, T) $ with a running time that is roughly exponential in $ n(\rho_\lambda)$.
\end{itemize}
Assume further that $ \rho_\lambda $ is self-dual of weight $ w(\rho_\lambda) $ and sign $ c(\rho_\lambda) \in \{\id, \cc\} $.
\begin{itemize}
\item[(2)] If $ \epsilon(\rho_\lambda) $ is known, then there is an algorithm to compute $ \LL(\rho_\lambda, T) $ with a running time that is roughly exponential in $ \lfloor n(\rho_\lambda) / 2\rfloor $.
\item[(3)] Otherwise, there is an algorithm to compute $ \epsilon(\rho_\lambda) $ with a running time that is typically exponential in $ \lfloor n(\rho_\lambda) / 2\rfloor $, and at worst exponential in $ n(\rho_\lambda) $.
\end{itemize}
\end{theorem}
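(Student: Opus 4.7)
The plan is to treat all three parts uniformly by computing the Taylor coefficients of $\LL(\rho_\lambda, T)$ as a formal power series in $T$ and then using rationality of $\LL(\rho_\lambda, T)$ together with the explicit bound on its degrees to reduce to a finite-dimensional linear problem. The starting point is the Euler product
$$\LL(\rho_\lambda, T) = \prod_v \det(1 - \rho_\lambda(\mathrm{Frob}_v)|_{V_\lambda^{I_v}} T^{\deg v})^{-1},$$
where $v$ ranges over places of $K$ and $V_\lambda^{I_v}$ denotes the inertia invariants; expanding each factor geometrically, the coefficient of $T^k$ depends only on the action of Frobenius at places of degree at most $k$. Since the number of such places is $O(q^k / k)$ by the Weil bounds, the cost of computing the first $N$ Taylor coefficients is $O(q^N)$ up to the local arithmetic cost of assembling each Euler factor.

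For part (1), I would compute the first $n(\rho_\lambda) + d(\rho_\lambda) + 1$ Taylor coefficients by enumerating effective divisors of bounded degree, and then recover $\LL(\rho_\lambda, T)$ by Pad\'e approximation, which amounts to solving a structured linear system whose solution is unique by rationality. Since $d(\rho_\lambda)$ is bounded in terms of the genus of $C$ and the dimension of $\rho_\lambda$, it is dominated by $n(\rho_\lambda)$ for generic input, and the total cost is $O(q^{n(\rho_\lambda)})$, which gives the claimed exponential bound.

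For part (2), the functional equation pairs the coefficient of $T^i$ in the numerator polynomial of $\LL(\rho_\lambda, T)$ with the coefficient of $T^{n(\rho_\lambda) - i}$, the pairing being an explicit monomial in $\epsilon(\rho_\lambda)$ and $q^{w(\rho_\lambda) + 1}$, possibly composed with $c(\rho_\lambda)$. With $\epsilon(\rho_\lambda)$ known, this halves the number of independent coefficients, so enumerating places only up to degree $\lfloor n(\rho_\lambda) / 2 \rfloor$ suffices and the cost drops to $O(q^{\lfloor n(\rho_\lambda) / 2 \rfloor})$. For part (3), I would compute coefficients one at a time starting from the midpoint and, at each step, read off $\epsilon(\rho_\lambda)$ from the paired functional equation as soon as a non-vanishing coefficient appears. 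The main obstacle is the degenerate regime in which the low-index coefficients near the midpoint all vanish: in that case the algorithm must walk towards index $n(\rho_\lambda)$ before isolating $\epsilon(\rho_\lambda)$, which recovers the worst-case exponential dependence on $n(\rho_\lambda)$ and matches the cost of part (1).
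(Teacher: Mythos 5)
Your proposal is correct in substance and reaches the same running-time bounds, but it takes a genuinely different route in part (1). The paper's Algorithm \ref{alg:rationalfunction} treats $D(\rho_\lambda, T)$ as a \emph{known input} (justified by Theorem \ref{thm:weilconjectures}(2), which often forces $D = 1$, and by the explicit formulas for $P_0 \cdot P_2$ in the remaining cases), so it simply multiplies the truncated Euler product by $D$ to recover $N(\rho_\lambda, T)$ from its first $n(\rho_\lambda)$ coefficients. You instead propose a Pad\'e reconstruction that recovers both $N$ and $D$ simultaneously from $n + d + 1$ coefficients, which is more general in that it does not presuppose $D$ but costs a factor of roughly $q^{d}$ more in the enumeration of places. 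You correctly note this is subsumed into the ``roughly exponential in $n$'' claim when $d$ is small; the paper sidesteps the issue entirely by fixing $D$ in advance. One caveat you should be explicit about: Pad\'e reconstruction requires knowing $n$ and $d$ separately, not just the difference $n - d$ given by the Grothendieck--Ogg--Shafarevich formula, so in practice you are implicitly assuming $d$ is known — at which point $D$ itself is usually available and the paper's simpler method applies. For parts (2) and (3) your strategy matches the paper's Algorithms \ref{alg:functionalequation} and \ref{alg:computeepsilon} closely, though your description of the functional-equation ``pairing'' as a monomial in $\epsilon$ and $q^{w+1}$ is exact only when $D(T) = 1$; for $D \neq 1$ the relation $N_{n-k} = \epsilon M_k$ involves a short linear recurrence in the $D_j$, which the paper's Algorithm \ref{alg:computecoefficients} handles explicitly. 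Your part (3) ``walk from the midpoint until a nonzero coefficient appears'' is exactly the minimality search in step (3) of Algorithm \ref{alg:computeepsilon}, and your analysis of the worst-case degeneration is the same as the paper's.
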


The algorithms in Theorem \ref{thm:mainresults} have been implemented in Magma \cite{Ang25} and bug-tested against existing implementations of $ L(E, s) $ and $ L(E, \chi, s) $. The proof of (1) is elementary and only uses properties of formal power series. The proofs of (2) and (3) use a generalisation of (1), and are heavily inspired by the algorithm of Comeau-Lapointe--David--Lal\'in--Li for $ L(E, \chi, s) $ \cite[Section 5.1]{CLDLL22}, although their analogue of (1) is an ad-hoc computation using Legendre symbols.

Section \ref{sec:adicrepresentations} reviews some useful background on invariants associated to $ \rho_\lambda $, including $ \LL(\rho_\lambda, T) $ and $ \epsilon(\rho_\lambda) $, which are sometimes implicit in the literature without good references. Section \ref{sec:computingfunctions} details Theorem \ref{thm:mainresults} and briefly justifies their running time complexities. Section \ref{sec:examplecomputations} works them out by hand for explicit examples of elliptic curves, Dirichlet characters, and other motivic objects.

\pagebreak

\section{\texorpdfstring{$ \lambda $}{l}-adic representations}
\label{sec:adicrepresentations}

In this section, $ K $ will be a non-archimedean local field or a global function field, which will be clear from context, of residue size $ q \in \N $. Moreover, let $ \lambda \nmid q $ be a fixed prime of a coefficient number field $ F $, and let $ V $ and $ V_\lambda $ be fixed finite-dimensional vector spaces over $ F $ and $ F_\lambda $ respectively. If $ F = \Q $, write $ \lambda = \ell $ instead.

\subsection{Local invariants}

In this subsection, let $ K $ be a non-archimedean local field, let $ \OO_K $ be its valuation ring with normalised valuation $ |\cdot|_K $, normalised uniformiser $ \varpi_K $, and residue field size $ q_K $ such that $ |\varpi_K|_K = q_K^{-1} $, and let $ W_K $ be its Weil group with inertia subgroup $ I_K $ and a fixed choice of geometric Frobenius $ \phi_K $.

A \textbf{$ \lambda $-adic representation} of $ K $ is a continuous homomorphism $ \rho_\lambda : W_K \to \GL(V_\lambda) $, where $ \GL(V_\lambda) $ is endowed with the $ \lambda $-adic topology.

\begin{example}
A smooth proper variety $ X $ over $ K $ has a natural $ \lambda $-adic representation $ \rho_{X, \ell} $ given by the continuous action of $ W_K $ on the first $ \ell $-adic cohomology group $ H_\et^1(\overline{X}, \Q_\ell) $ of $ X $ induced by functoriality. Then $ \rho_{X, \ell} $ is \emph{independent of $ \ell $}, in the weak sense that the characteristic polynomial of $ \rho_{X, \ell}(\phi_K) $ has coefficients in $ \Q $, since it is true when $ X $ is abelian \cite[Theorem 4.3]{GR72} and $ H_\et^1(\overline{X}, \Q_\ell) \cong H_\et^1(\Alb(\overline{X}), \Q_\ell) $ \cite[Theorem 2A9(6)]{Kle68}, where $ \Alb(X) $ is the Albanese abelian variety of $ X $.
\end{example}

\begin{example}
An \emph{Artin representation} of $ K $ is a continuous homomorphism $ \varrho : W_K \to \GL(V) $, where $ \GL(V) $ is endowed with the discrete topology. This can be viewed as a $ \lambda $-adic representation of $ K $ via a fixed embedding $ V \hookrightarrow V_\lambda $.
\end{example}

The invariants of $ \rho_\lambda $ are best defined in terms of \emph{Weil--Deligne representations} $ (\rho, \nu) $. This is the data of a \emph{Weil representation} $ \rho $, which is a continuous homomorphism from $ W_K $ to $ \GL(V') $, where $ V' $ is a finite-dimensional vector space over an arbitrary field $ F' $ with $ \ch(F') = 0 $ endowed with the discrete topology, and a nilpotent endomorphism $ \nu : F' \to F' $ such that for any $ c \in W_K $,
$$ \rho(c) \cdot \nu \cdot \rho(c)^{-1} = |c|_K \cdot \nu. $$
It turns out that there is an explicit equivalence between the symmetric monoidal categories of $ \lambda $-adic representations $ \rho_\lambda $ and of Weil--Deligne representations $ (\rho, \nu) $ \cite[Section 8]{Del73b}, where the tensor product is given by
$$ (\rho, \nu) \otimes (\sigma, \upsilon) \coloneqq (\rho \otimes \sigma, \nu \otimes 1 + 1 \otimes \upsilon). $$
Then $ \rho_\lambda $ is said to be \emph{unramified} if $ \rho $ is unramified and $ \nu = 0 $, and \emph{indecomposable} if $ (\rho, \nu) $ is not a direct sum of subspaces invariant under both $ \rho $ and $ \nu $.

For any subrepresentation $ \sigma : W_K \to \GL(V') $ of $ \rho $, let $ \sigma^G $ denote the subrepresentation of $ \sigma $ invariant under a subgroup $ G $ of $ W_K $, namely
$$ \sigma^G \coloneqq \{\xi \in V' : \forall c \in G, \ \xi^{\sigma(c)} = \xi\}. $$
The \textbf{Artin conductor exponent} $ \alpha(\rho_\lambda) $ of $ \rho_\lambda $ is the sum of
$$ \alpha(\rho) \coloneqq \sum_{r = 0}^\infty \dfrac{\dim\rho - \dim\rho^{G_{K, r}}}{[G_{K, r} : I_K]}, \qquad \beta(\rho_\lambda) \coloneqq \dim\rho^{I_K} - \dim\ker(\nu)^{I_K}, $$
where $ G_{K, r} $ is the $ r $-th higher ramification group in lower numbering of $ K $ for each $ r \in \N $, so that $ G_{K, 0} = I_K $. Then $ \alpha(\rho_\lambda) = 0 $ precisely when $ \rho_\lambda $ is unramified, since $ G_{K, r} = 0 $ for any $ r \in \N $ and $ \nu = 0 $. Note that this one of many equivalent definitions \cite[Section 4 to Section 8]{Ulm16}. When $ \rho_\lambda = \rho_{E, \ell} $ arises from an elliptic curve $ E $ over $ K $, this is equivalent to the usual conductor exponent of $ E $.

\pagebreak

Now let $ \psi $ be a non-trivial additive character of $ K $, let $ \mu $ be an additive Haar measure on $ K $, and let $ \gamma(\psi) $ be the smallest integer $ r \in \Z $ such that $ \psi(\varpi_K^r\OO_K) = 1 $. Langlands \cite{Lan70} and Deligne \cite{Del73b} independently gave non-constructive proofs on the existence of a constant $ \epsilon(\rho, \psi, \mu) \in \C^\times $ unique under the following properties.
\begin{itemize}
\item Additivity: if $ \sigma $ and $ \tau $ are Weil representations of $ K $ in a short exact sequence $ 0 \to \rho \to \sigma \to \tau \to 0 $, then
$$ \epsilon(\sigma, \psi, \mu) = \epsilon(\rho, \psi, \mu) \cdot \epsilon(\tau, \psi, \mu). $$
\item Inductivity in degree zero: if $ \rho' $ is a virtual Weil representation of degree zero over a finite separable extension $ K' $ of $ K $, and $ \mu' $ is an additive Haar measure on $ K' $, then
$$ \epsilon(\Ind_K^{K'}\rho, \psi, \mu) = \epsilon(\rho', \psi \circ \tr_{K' / K}, \mu'). $$
\item Quasi-character formula: if $ \rho $ is one-dimensional and corresponds to a quasi-character $ \chi : W_K \to \C^\times $ via local class field theory, then
$$ \epsilon(\rho, \psi, \mu) =
\begin{cases}
\chi(\phi_K)^{\gamma(\psi)} \cdot q_K^{\gamma(\psi)} \cdot \mu(\OO_K) & \text{if} \ \rho \ \text{is unramified}, \\
\sum_{r \in \Z} \int_{\varpi_K^r\OO_K} \chi^{-1}(x)\psi(x)\d\mu(x) & \text{otherwise}.
\end{cases}
$$
\end{itemize}
This depends on $ \psi $ and $ \mu $ in general, but Langlands and Deligne made specific choices whose differences in convention are summarised by Tate \cite[Section 3.6]{Tat79}. The \textbf{local $ \epsilon $-factor} of $ \rho_\lambda $ is then $ \epsilon(\rho_\lambda, \psi, \mu) \coloneqq \epsilon(\rho, \psi, \mu) \cdot \delta(\rho_\lambda) $, where
$$ \delta(\rho_\lambda) \coloneqq \dfrac{\det(-\rho^{I_K}(\phi_K))}{\det(-\ker(\nu)^{I_K}(\phi_K))}. $$
The following generalises a well-known formula for tensor products of Weil representations with one unramified factor to that of $ \lambda $-adic representations.

\begin{proposition}
\label{prop:unramifiedepsilon}
Let $ \rho_\lambda $ and $ \sigma_\lambda $ be $ \lambda $-adic representations of a non-archimedean local field $ K $ such that $ \sigma_\lambda $ is unramified. Then
$$ \epsilon(\rho_\lambda \otimes \sigma_\lambda, \psi, \mu) = \epsilon(\rho_\lambda, \psi, \mu)^{\dim\sigma_\lambda} \cdot \det\sigma_\lambda(\phi_K)^{\alpha(\rho_\lambda) + \gamma(\psi)\dim\rho_\lambda}. $$
\end{proposition}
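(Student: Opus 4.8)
The plan is to reduce to Weil--Deligne representations, split the local $\epsilon$-factor as $\epsilon = \epsilon(\text{Weil part}) \cdot \delta$, handle the Weil part via the classical formula for twisting by an unramified representation, and settle the $\delta$-part by a short determinant computation. Write $\rho_\lambda \leftrightarrow (\rho, \nu)$ under the equivalence of categories recalled above. Since $\sigma_\lambda$ is unramified, its associated Weil--Deligne representation is $(\sigma, 0)$ with $\sigma$ an unramified Weil representation, so $\sigma$ is trivial on $I_K$ and $\det \sigma(\phi_K) = \det \sigma_\lambda(\phi_K)$; by the definition of the tensor product, $\rho_\lambda \otimes \sigma_\lambda \leftrightarrow (\rho \otimes \sigma, \nu \otimes 1)$. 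By the definition of the local $\epsilon$-factor, $\epsilon(\rho_\lambda \otimes \sigma_\lambda, \psi, \mu) = \epsilon(\rho \otimes \sigma, \psi, \mu) \cdot \delta(\rho_\lambda \otimes \sigma_\lambda)$ and $\epsilon(\rho_\lambda, \psi, \mu) = \epsilon(\rho, \psi, \mu) \cdot \delta(\rho_\lambda)$, so it suffices to prove
$$ \epsilon(\rho \otimes \sigma, \psi, \mu) = \epsilon(\rho, \psi, \mu)^{\dim \sigma} \cdot \det \sigma(\phi_K)^{\alpha(\rho) + \gamma(\psi) \dim \rho} \quad \text{and} \quad \delta(\rho_\lambda \otimes \sigma_\lambda) = \delta(\rho_\lambda)^{\dim \sigma_\lambda} \cdot \det \sigma_\lambda(\phi_K)^{\beta(\rho_\lambda)}, $$
after which multiplying the two and using $\alpha(\rho_\lambda) = \alpha(\rho) + \beta(\rho_\lambda)$, $\dim \rho = \dim \rho_\lambda$, and $\dim \sigma = \dim \sigma_\lambda$ gives the claim.

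For the first identity I would invoke the well-known formula for Weil representations with one unramified factor mentioned just before the proposition; if one wants the paper to be self-contained, this reduces to the one-dimensional case recorded by Tate \cite[(3.4.6)]{Tat79} by putting $\sigma$ in triangular form over $\overline{F_\lambda}$, which exhibits a filtration of $\sigma$ with one-dimensional unramified quotients $\chi_1, \ldots, \chi_{\dim \sigma}$, tensoring it with $\rho$, applying the additivity axiom of $\epsilon$-factors so that $\epsilon(\rho \otimes \sigma, \psi, \mu) = \prod_i \epsilon(\rho \otimes \chi_i, \psi, \mu)$, inserting $\epsilon(\rho \otimes \chi_i, \psi, \mu) = \epsilon(\rho, \psi, \mu) \cdot \chi_i(\phi_K)^{\alpha(\rho) + \gamma(\psi) \dim \rho}$, and using $\prod_i \chi_i(\phi_K) = \det \sigma(\phi_K)$.

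The substance of the proposition is the $\delta$-identity. Since $\sigma$ is trivial on $I_K$, the group $I_K$ acts on $V_\rho \otimes V_\sigma$ through the first tensor factor, so $(\rho \otimes \sigma)^{I_K} = \rho^{I_K} \otimes \sigma$ with $\phi_K$ acting as $\rho^{I_K}(\phi_K) \otimes \sigma(\phi_K)$; likewise $\ker(\nu \otimes 1) = \ker(\nu) \otimes V_\sigma$, and since $I_K$ preserves $\ker(\nu)$ --- which follows from the Weil--Deligne relation $\rho(c) \nu \rho(c)^{-1} = |c|_K \nu$ --- this gives $\ker(\nu \otimes 1)^{I_K} = \ker(\nu)^{I_K} \otimes \sigma$ with $\phi_K$ acting as $\ker(\nu)^{I_K}(\phi_K) \otimes \sigma(\phi_K)$. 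Substituting these into the definition of $\delta$ and using $\det(A \otimes B) = (\det A)^{\dim B} (\det B)^{\dim A}$, the sign $(-1)^{\dim \rho^{I_K} \cdot \dim \sigma}$ produced in the numerator, the analogous sign in the denominator, and the two matching signs hidden in $\delta(\rho_\lambda)$ collect into an even power of $-1$ and drop out, leaving exactly $\delta(\rho_\lambda)^{\dim \sigma_\lambda} \cdot \det \sigma_\lambda(\phi_K)^{\dim \rho^{I_K} - \dim \ker(\nu)^{I_K}}$, whose exponent equals $\beta(\rho_\lambda)$ by definition. I expect the only real difficulty to be this bookkeeping --- tracking the several $(-1)$-signs and confirming they cancel, and checking that the identifications of the $I_K$-invariants and of $\ker(\nu \otimes 1)$ genuinely use both the triviality of $\sigma$ on $I_K$ and the vanishing of its monodromy operator --- since nothing deeper than the cited classical formula for Weil representations enters.
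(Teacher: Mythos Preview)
Your proposal is correct and follows essentially the same approach as the paper: split $\epsilon(\rho_\lambda \otimes \sigma_\lambda,\psi,\mu)$ into the Weil part (handled by the classical unramified-twist formula, which the paper simply cites from Deligne) and the $\delta$-part (handled by the same determinant computation). Your extra reduction of the Weil-part formula to the one-dimensional case via triangularisation is a nice optional supplement, and your sign bookkeeping in the $\delta$-computation is more explicit than the paper's, which sidesteps it by keeping the minus sign inside and using $\det(-A\otimes B)=\det(-A)^{\dim B}\det(B)^{\dim A}$ directly.
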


\begin{proof}
Let $ (\rho, \nu) $ and $ (\sigma, \upsilon) $ be the Weil--Deligne representations associated to $ \rho_\lambda $ and $ \sigma_\lambda $ respectively. Then \cite[Equation 5.5.3]{Del73b}
$$ \epsilon(\rho \otimes \sigma, \psi, \mu) = \epsilon(\rho, \psi, \mu)^{\dim\sigma} \cdot \det\sigma(\phi_K)^{\alpha(\rho) + \gamma(\psi)\dim\rho}. $$
On the other hand,
$$
\begin{aligned}
\delta(\rho_\lambda \otimes \sigma_\lambda)
& = \dfrac{\det(-\rho^{I_K}(\phi_K) \otimes \sigma(\phi_K))}{\det(-\ker(\nu)^{I_K}(\phi_K) \otimes \sigma(\phi_K))} \\
& = \dfrac{\det(-\rho^{I_K}(\phi_K))^{\dim\sigma} \cdot \det\sigma(\phi_K)^{\dim\rho^{I_K}}}{\det(-\ker(\nu)^{I_K}(\phi_K))^{\dim\sigma} \cdot \det\sigma(\phi_K)^{\dim\ker(\nu)^{I_K}}} \\
& = \delta(\rho_\lambda)^{\dim\sigma} \cdot \det\sigma(\phi_K)^{\beta(\rho_\lambda)}.
\end{aligned}
$$
The result follows by multiplying both expressions.
\end{proof}

In particular, by Proposition \ref{prop:unramifiedepsilon} with $ \rho_\lambda = \1 $ and the quasi-character formula,
$$ \epsilon(\sigma_\lambda, \psi, \mu) = (q_K^{\gamma(\psi)} \cdot \mu(\OO_K))^{\dim\sigma_\lambda} \cdot \det\sigma_\lambda(\phi_K)^{\gamma(\psi)}, $$
so that $ \epsilon(\sigma_\lambda, \psi, \mu) = 1 $ whenever $ \gamma(\psi) = 0 $ and $ \mu(\OO_K) = 1 $.

\pagebreak

\subsection{Global invariants}

Now let $ K $ be a \emph{global function field}, namely the function field $ \F_q(C) $ of a smooth proper geometrically irreducible curve $ C $ of genus $ g(C) \in \N $ over the finite field $ \F_q $ of prime power size $ q \in \N $. The valuative criterion for properness gives a bijective correspondence between the set of closed points of $ C $ and the set of places $ V_K $. In particular, an open affine subset $ C_0 \subseteq C $ corresponds to a ring of integers $ \OO_K \subseteq K $ whose fraction field is $ K $, and its complement $ C \setminus C_0 $ corresponds to finitely many infinite places. For each place $ v \in V_K $, write
$$ \OO_v \coloneqq \OO_{K_v}, \ |\cdot|_v \coloneqq |\cdot|_{K_v}, \ q_v \coloneqq q_{K_v}, \ W_v \coloneqq W_{K_v}, \ I_v \coloneqq I_{K_v}, \ \phi_v \coloneqq \phi_{K_v}. $$
Furthermore, let $ G_K $ be the absolute Galois group of $ K $, and let $ \overline{G_K} \coloneqq G_{\overline{\F_q}(C)} $ be its quotient by the smallest closed subgroup containing $ I_v $ for all places $ v \in V_K $.

As in the local case, a \textbf{$ \lambda $-adic representation} of $ K $ is a continuous homomorphism $ \rho_\lambda : G_K \to \GL(V_\lambda) $, where $ \GL(V_\lambda) $ is endowed with the $ \lambda $-adic topology. Assume throughout that $ \rho_\lambda $ is \emph{almost everywhere unramified}, namely that its restrictions $ \rho_{\lambda, v} \coloneqq \rho_\lambda|_{W_v} : W_v \to \GL(V_\lambda) $ are unramified as $ \lambda $-adic representations of $ K_v $ at all places $ v $ in an open subset $ C_0 \subseteq C $. It turns out that there is an explicit equivalence between the categories of $ \lambda $-adic representations unramified on $ C_0 $ and of lisse $ \lambda $-adic sheaves on $ C_0 $ \cite[Proposition 1.2.5]{Jou77}. Let $ \FF_{\rho_\lambda} $ denote the push-forward of this lisse sheaf to $ C $, which is a constructible sheaf on $ C $.

A $ \lambda $-adic representation $ \rho_\lambda $ is said to be \textbf{self-dual of weight $ w(\rho_\lambda) $ and sign $ c(\rho_\lambda) $} in the sense of Ulmer \cite[Section 3.1.4]{Ulm05} if
\begin{itemize}
\item it is \textbf{$ \iota $-pure of weight $ w(\rho_\lambda) $} for some fixed embedding $ \iota : F_\lambda \hookrightarrow \C $, namely that there is some integer $ w(\rho_\lambda) \in \Z $ such that each eigenvalue of $ \rho_\lambda(\phi_v) $ has modulus $ q_v^{w(\rho_\lambda) / 2} $ under $ \iota $ at all unramified places $ v \in V_K $, and
\item there is some field endomorphism $ c(\rho_\lambda) : F_\lambda \to F_\lambda $ such that
$$ \rho_\lambda^\vee \cong \rho_\lambda^{c(\rho_\lambda)} \otimes F_\lambda(w(\rho_\lambda)), $$
where $ \rho_\lambda^\vee $ is the contragredient of $ \rho_\lambda $ and $ F_\lambda(w(\rho_\lambda)) $ is the Tate twist by $ w(\rho_\lambda) $, so that $ F_\lambda(w(\rho_\lambda))|_{W_v} = |\cdot|_v^{w(\rho_\lambda)} $ at all unramified places $ v \in V_K $.
\end{itemize}
By Lafforgue's theorem, if $ \rho_\lambda|_{\overline{G_K}} $ is absolutely irreducible, then it corresponds to a unique cuspidal automorphic representation of $ \GL_r(\A_K) $ in the sense of Langlands, and hence is $ \iota $-pure of some weight and independent of $ \lambda $ \cite[Theorem VII.6]{Laf02}.

In many cases, $ c(\rho_\lambda) $ is either the identity $ \id $ or complex conjugation $ \cc $.

\begin{example}
As in the local case, an abelian variety $ A $ over $ K $ has a natural $ \lambda $-adic representation $ \rho_{A, \ell} $, which is self-dual of weight $ 1 $ and sign $ \id $.
\end{example}

\begin{example}
As in the local case, an \emph{Artin representation} of $ K $ is a continuous homomorphism $ \varrho : G_K \to \GL(V) $, which is self-dual of weight $ 0 $ and some sign. When $ \varrho $ is one-dimensional, these correspond to Dirichlet characters with sign $ \cc $. While they are often of special interest, Artin representations with sign $ \id $ are expected to be rare in higher dimensions \cite{Roh13}.
\end{example}

The \textbf{Artin conductor} of $ \rho_\lambda $ is the effective multiplicative divisor
$$ \ff(\rho_\lambda) \coloneqq \prod_{v \in V_K} v^{\alpha(\rho_{\lambda, v})}, $$
which is a finite product since $ \rho_\lambda $ is almost everywhere unramified by assumption. When $ \rho_\lambda = \rho_{E, \ell} $ arises from an elliptic curve $ E $ over $ K $, this is the analogue for $ K $ of the usual conductor of $ E $ as seen in the LMFDB.

\pagebreak

For each place $ v \in V_K $, let $ \psi $ be a non-trivial additive character of $ \A_K / K $, and let $ \mu $ be an additive Haar measure on $ \A_K $ that is trivial on $ \A_K / K $ and on $ \OO_v $ for all but finitely many places $ v \in V_K $. The \textbf{global $ \epsilon $-factor} of $ \rho_\lambda $ is then
$$ \epsilon(\rho_\lambda) \coloneqq \prod_{v \in V_K} \epsilon(\rho_{\lambda, v}, \psi_v, \mu_v), $$
where $ \psi_v \coloneqq \psi|_{K_v} $ and $ \mu_v \coloneqq \mu|_{K_v} $. This is a finite product since $ \gamma(\psi_v) = 0 $ and $ \mu_v(\OO_v) = 1 $ for all but finitely many places $ v \in V_K $, and is independent of the choices of $ \{\psi_v\}_{v \in V_K} $ \cite[Equation 5.4]{Del73b} and of $ \{\mu_v\}_{v \in V_K} $ \cite[Equation 5.3]{Del73b}. Thus it suffices to choose $ \mu_v $ to be self-dual with respect to the Fourier inversion formula on $ \psi_v $ for each place $ v \in V_K $, so that $ \mu_v(\OO_v) = q_v^{-\gamma(\psi_v) / 2} $ \cite[Corollary 3 of Proposition VII.2.2]{Wei74}, which defines an additive Haar measure $ \mu \coloneqq \bigotimes_{v \in V_K} \mu_v $ on $ \A_K $ that is trivial on $ \A_K / K $ \cite[Corollary 2 of Theorem VII.2.1]{Wei74}.

\begin{proposition}
\label{prop:modulusepsilon}
Let $ \rho_\lambda $ be a $ \lambda $-adic representation of a global function field $ K $ that is self-dual of weight $ w(\rho_\lambda) $ and sign $ c(\rho_\lambda) $. Then
$$ |\epsilon(\rho_\lambda)| = q^{(\deg\ff(\rho_\lambda) + (2g(C) - 2)\dim\rho_\lambda)(w(\rho_\lambda) + 1) / 2}. $$
\end{proposition}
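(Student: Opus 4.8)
The plan is to compute $|\epsilon(\rho_\lambda)|$ place by place using the definition $\epsilon(\rho_\lambda) = \prod_{v \in V_K} \epsilon(\rho_{\lambda, v}, \psi_v, \mu_v)$, and to control the absolute values of the local $\epsilon$-factors via self-duality. The key input is that self-duality of weight $w := w(\rho_\lambda)$ forces, for each place $v$, a relation between $\epsilon(\rho_{\lambda, v}, \psi_v, \mu_v)$ and $\epsilon(\rho_{\lambda, v}^\vee, \psi_v, \mu_v)$. Indeed, the contragredient $\rho_{\lambda, v}^\vee$ is isomorphic to $\rho_{\lambda, v}^{c(\rho_\lambda)} \otimes |\cdot|_v^w$, so applying Proposition \ref{prop:unramifiedepsilon} with $\sigma_\lambda = |\cdot|_v^w$ (which is unramified, one-dimensional, with $\sigma_\lambda(\phi_v)$ acting by $q_v^w$) gives
$$ \epsilon(\rho_{\lambda, v}^\vee, \psi_v, \mu_v) = \epsilon(\rho_{\lambda, v}^{c(\rho_\lambda)}, \psi_v, \mu_v) \cdot q_v^{w(\alpha(\rho_{\lambda, v}) + \gamma(\psi_v)\dim\rho_\lambda)}. $$
Since $c(\rho_\lambda)$ is a field endomorphism of $F_\lambda$ and $\iota$ is compatible with it up to the fixed embedding, $|\epsilon(\rho_{\lambda, v}^{c(\rho_\lambda)}, \psi_v, \mu_v)| = |\epsilon(\rho_{\lambda, v}, \psi_v, \mu_v)|$ under $\iota$. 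On the other hand, there is the standard duality relation for local $\epsilon$-factors: $\epsilon(\rho_{\lambda, v}, \psi_v, \mu_v) \cdot \epsilon(\rho_{\lambda, v}^\vee, \overline{\psi_v}, \mu_v) = (\text{unit times}) \det\rho_{\lambda, v}(\phi_v)$ evaluated appropriately; more precisely one uses that $\epsilon(\rho, \psi, \mu)\epsilon(\rho^\vee, \psi, \mu)$ has absolute value $q_v^{(\alpha(\rho_{\lambda,v}) + \gamma(\psi_v)\dim\rho_\lambda) \cdot w}$ combined with $\iota$-purity, which pins down $|\epsilon(\rho_{\lambda, v}, \psi_v, \mu_v)|$ to be $q_v^{(\alpha(\rho_{\lambda, v}) + \gamma(\psi_v)\dim\rho_\lambda)(w + 1)/2}$.

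The cleanest route is probably to reduce to the global functional equation for the completed L-function: $\iota$-purity of weight $w$ and self-duality imply that $\LL(\rho_\lambda, T)$ satisfies the functional equation displayed in the introduction, and comparing leading coefficients on both sides of that functional equation expresses $\epsilon(\rho_\lambda)$ in terms of the top and bottom coefficients of $\LL(\rho_\lambda, T)$. By the Grothendieck--Lefschetz trace formula and Poincaré duality for the constructible sheaf $\FF_{\rho_\lambda}$ on $C$, these coefficients have known absolute value: the Euler characteristic computation $n(\rho_\lambda) - d(\rho_\lambda) = \deg\ff(\rho_\lambda) + (2g(C) - 2)\dim\rho_\lambda$ (Grothendieck--Ogg--Shafarevich) controls the degree, and $\iota$-purity controls the size of each root, giving $|\epsilon(\rho_\lambda)| = q^{(n(\rho_\lambda) - d(\rho_\lambda))(w + 1)/2}$. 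Substituting the Euler characteristic formula for $n(\rho_\lambda) - d(\rho_\lambda)$ yields the claim.

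I would carry this out in the following order. First, establish the local absolute value formula $|\epsilon(\rho_{\lambda, v}, \psi_v, \mu_v)| = q_v^{(\alpha(\rho_{\lambda, v}) + \gamma(\psi_v)\dim\rho_\lambda)(w + 1)/2}$ at each place, using Proposition \ref{prop:unramifiedepsilon}, the self-duality relation $\rho_{\lambda, v}^\vee \cong \rho_{\lambda, v}^{c(\rho_\lambda)} \otimes |\cdot|_v^w$, $\iota$-purity of $\det$, and the global compatibility that makes $|\cdot|$ insensitive to the conjugation $c(\rho_\lambda)$. Second, take the product over all $v \in V_K$: the $\gamma(\psi_v)$ terms contribute $\prod_v q_v^{\gamma(\psi_v)\dim\rho_\lambda (w+1)/2} = q^{\deg(\mathfrak{d}) \dim\rho_\lambda (w+1)/2}$ where $\mathfrak{d}$ is the divisor of the chosen differential (the "conductor of $\psi$"), which has degree $2g(C) - 2$ by Riemann--Roch; the $\alpha(\rho_{\lambda, v})$ terms contribute $q^{\deg\ff(\rho_\lambda)(w+1)/2}$ directly from the definition of $\ff(\rho_\lambda)$. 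Third, combine to get $|\epsilon(\rho_\lambda)| = q^{(\deg\ff(\rho_\lambda) + (2g(C) - 2)\dim\rho_\lambda)(w + 1)/2}$.

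The main obstacle will be making precise the interaction between the field endomorphism $c(\rho_\lambda)$ and the archimedean absolute value $|\cdot|$ coming from $\iota$: one needs that $|\iota(\epsilon(\rho_{\lambda, v}^{c(\rho_\lambda)}, \psi_v, \mu_v))| = |\iota(\epsilon(\rho_{\lambda, v}, \psi_v, \mu_v))|$, which requires either that $c(\rho_\lambda) \in \{\id, \cc\}$ (the case of interest, where $\iota \circ \cc$ differs from $\iota$ by complex conjugation on $\C$, an isometry) or a more careful Galois-equivariance argument for general $c(\rho_\lambda)$. A secondary technical point is being careful about the dependence of the local $\epsilon$-factor on the additive character $\psi_v$ — in particular that replacing $\psi_v$ by $\overline{\psi_v} = \psi_v^{-1}$ does not change absolute values — and about the compatibility of the chosen self-dual Haar measures $\mu_v$ with $\gamma(\psi_v)$, for which the relation $\mu_v(\OO_v) = q_v^{-\gamma(\psi_v)/2}$ recorded in the text is exactly what is needed. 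Everything else is bookkeeping with $\iota$-purity and the product formula.
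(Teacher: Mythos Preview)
Your plan is essentially the paper's proof: establish the local identity $|\epsilon(\rho_{\lambda,v},\psi_v,\mu_v)| = q_v^{(\alpha(\rho_{\lambda,v}) + \gamma(\psi_v)\dim\rho_\lambda)(w(\rho_\lambda)+1)/2}$, take the product over $v$, and use $\prod_v q_v^{\gamma(\psi_v)} = q^{2g(C)-2}$. The paper carries this out in two lines by citing Rohrlich \cite[Section 12, Proposition (i)]{Roh94} for the local formula and Weil \cite[Proposition VII.2.6]{Wei74} for the product identity, so your sketch of a derivation via self-duality and Proposition~\ref{prop:unramifiedepsilon} is more detail than the paper itself supplies.
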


\begin{proof}
For each place $ v \in V_K $, \cite[Proposition (i) of Section 12]{Roh94}
$$ |\epsilon(\rho_\lambda, \psi_v, \mu_v)| = q_v^{(\gamma(\psi_v)\dim\rho_\lambda + \alpha(\rho_{\lambda, v}))(w(\rho_\lambda) + 1) / 2}, $$
so the result follows from $ \prod_{v \in V_K} q_v^{\gamma(\psi_v)} = q^{2g(C) - 2} $ \cite[Proposition VII.2.6]{Wei74}.
\end{proof}

\subsection{L-functions}

For each place $ v \in V_K $, the \textbf{formal local Euler factor} of $ \rho_\lambda $ at $ v $ is the inverse characteristic polynomial of $ \rho_{\lambda, v}^{I_v}(\phi_v) \in \GL(V_\lambda) $, namely
$$ \LL_v(\rho_\lambda, T) \coloneqq \det(1 - \rho_{\lambda, v}^{I_v}(\phi_v) \cdot T^{\deg v}) \in 1 + T \cdot F_\lambda[T]. $$
The \textbf{formal L-function} of $ \rho_\lambda $ is then the formal power series
$$ \LL(\rho_\lambda, T) \coloneqq \prod_{v \in V_K} \LL_v(\rho_\lambda, T)^{-1} \in 1 + T \cdot F_\lambda[[T]], $$
which recovers the usual L-function of $ \rho_\lambda $ by $ L(\rho_\lambda, s) \coloneqq \LL(\rho_\lambda, q^{-s}) $. As a consequence of the proof of the Weil conjectures, $ \LL(\rho_\lambda, T) $ is known to be a rational function satisfying a functional equation and the Riemann hypothesis.

\begin{theorem}
\label{thm:weilconjectures}
Let $ \rho_\lambda $ be an almost everywhere unramified $ \lambda $-adic representation of a global function field $ K = \F_q(C) $.
\begin{enumerate}
\item Rationality: there are unique coprime polynomials $ N(\rho_\lambda, T), D(\rho_\lambda, T) \in 1 + T \cdot F_\lambda[T] $ of degrees $ n(\rho_\lambda), d(\rho_\lambda) \in \N $ respectively such that
$$ \LL(\rho_\lambda, T) = \dfrac{N(\rho_\lambda, T)}{D(\rho_\lambda, T)}. $$
\item Integrality: if $ \rho_\lambda $ has no $ \overline{G_K} $-invariants, then $ d(\rho_\lambda) = 0 $.
\item Degree formula:
$$ n(\rho_\lambda) - d(\rho_\lambda) = \deg\ff(\rho_\lambda) + (2g(C) - 2)\dim\rho_\lambda. $$
\item Functional equation:
$$ \LL(\rho_\lambda, T) = \epsilon(\rho_\lambda) \cdot T^{n(\rho_\lambda) - d(\rho_\lambda)} \cdot \LL(\rho_\lambda^\vee, (qT)^{-1}). $$
\item Riemann hypothesis: if $ \rho_\lambda $ is $ \iota $-pure of weight $ w(\rho_\lambda) $, then the roots of $ N(\rho_\lambda, T) $ have modulus $ q^{-(w(\rho_\lambda) + 1) / 2} $.
\end{enumerate}
\end{theorem}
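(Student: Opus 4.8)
The plan is to translate $ \LL(\rho_\lambda, T) $ into the étale cohomology of the constructible sheaf $ \FF_{\rho_\lambda} $ on $ C $ and to read off each of the five assertions from the standard cohomological machinery over finite fields. Fix an open immersion $ j : C_0 \hookrightarrow C $ over which $ \rho_\lambda $ is unramified, let $ \FF $ denote the lisse $ F_\lambda $-sheaf on $ C_0 $ corresponding to $ \rho_\lambda|_{C_0} $, so that $ \FF_{\rho_\lambda} = j_*\FF $ has stalk $ \rho_{\lambda, v}^{I_v} $ at each $ v \in C \setminus C_0 $; then $ \LL(\rho_\lambda, T) $ is exactly the Grothendieck $ L $-function of $ \FF_{\rho_\lambda} $ over $ \F_q $. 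Write $ \overline{C} \coloneqq C \times_{\F_q} \overline{\F_q} $ and $ \overline{C_0} \coloneqq C_0 \times_{\F_q} \overline{\F_q} $, and let $ \phi $ denote geometric Frobenius on the finite-dimensional $ F_\lambda $-vector spaces $ H^i(\overline{C}, \FF_{\rho_\lambda}) $, $ i \in \{0, 1, 2\} $. Assertions (1)--(3) are formal consequences of the Grothendieck--Lefschetz trace formula and the Grothendieck--Ogg--Shafarevich Euler characteristic formula; (4) combines Poincaré--Verdier duality with Laumon's product formula for $ \epsilon $-constants (after Deligne); and (5) is Deligne's Weil II.

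For (1), the Grothendieck--Lefschetz trace formula gives the identity of rational functions
$$ \LL(\rho_\lambda, T) = \prod_{i = 0}^2 \det(1 - \phi T \mid H^i(\overline{C}, \FF_{\rho_\lambda}))^{(-1)^{i + 1}} \in F_\lambda(T), $$
and writing the right-hand side in lowest terms, with the denominator normalised to have constant term $ 1 $, produces the unique coprime pair $ N(\rho_\lambda, T), D(\rho_\lambda, T) $; both lie in $ 1 + T \cdot F_\lambda[T] $ since each $ \det(1 - \phi T \mid H^i) $ has constant term $ 1 $ and $ \LL(\rho_\lambda, 0) = 1 $. For (2), since $ D(\rho_\lambda, T) $ divides $ \det(1 - \phi T \mid H^0) \cdot \det(1 - \phi T \mid H^2) $, it suffices to show $ H^0 $ and $ H^2 $ vanish: by adjunction $ H^0(\overline{C}, \FF_{\rho_\lambda}) = H^0(\overline{C_0}, \FF) = \rho_\lambda^{\overline{G_K}} $, which is $ 0 $ by hypothesis; and Poincaré--Verdier duality identifies $ H^2(\overline{C}, \FF_{\rho_\lambda}) $ with a Tate twist of the $ F_\lambda $-dual of $ H^0_c(\overline{C_0}, \FF^\vee) $, which vanishes automatically when $ C_0 \subsetneq C $ (a lisse sheaf on a non-proper connected curve has no compactly supported sections) and, when $ \rho_\lambda $ is everywhere unramified, is a twist of the dual of the $ \overline{G_K} $-coinvariants of $ \rho_\lambda $, still zero --- using here that $ \rho_\lambda|_{\overline{G_K}} $ is semisimple in all cases of interest, e.g. by Weil II when $ \rho_\lambda $ is $ \iota $-pure.

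For (3), apply the Grothendieck--Ogg--Shafarevich formula to $ \FF_{\rho_\lambda} $ on $ \overline{C} $. Using the short exact sequence relating $ j_!\FF $, $ j_*\FF $, and the skyscraper sheaves with stalks $ \rho_{\lambda, v}^{I_v} $ at $ v \in C \setminus C_0 $, together with the Euler characteristic of $ j_!\FF $, one obtains
$$ \chi(\overline{C}, \FF_{\rho_\lambda}) = (2 - 2g(C))\dim\rho_\lambda - \sum_{v \in V_K} \deg v \cdot \big( \dim\rho_\lambda - \dim\rho_{\lambda, v}^{I_v} + \operatorname{Sw}_v(\rho_\lambda) \big), $$
where $ \operatorname{Sw}_v(\rho_\lambda) $ is the Swan conductor at $ v $, the wild ($ r \geq 1 $) part of $ \alpha(\rho_{\lambda, v}) $. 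Unwinding the decomposition of $ \alpha(\rho_{\lambda, v}) $ into its tame part $ \dim\rho_\lambda - \dim\rho^{I_v} $, its wild part $ \operatorname{Sw}_v(\rho_\lambda) $, and $ \beta(\rho_{\lambda, v}) = \dim\rho^{I_v} - \dim\ker(\nu_v)^{I_v} $ (which reconciles $ \rho_{\lambda, v}^{I_v} $ with $ \ker(\nu_v)^{I_v} $), the local contribution at $ v $ is exactly $ \deg v \cdot \alpha(\rho_{\lambda, v}) $, so $ \chi(\overline{C}, \FF_{\rho_\lambda}) = (2 - 2g(C))\dim\rho_\lambda - \deg\ff(\rho_\lambda) $. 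Since any common factor cancelled in (1) reduces $ \deg N $ and $ \deg D $ by the same amount, $ n(\rho_\lambda) - d(\rho_\lambda) = \dim H^1 - \dim H^0 - \dim H^2 = -\chi(\overline{C}, \FF_{\rho_\lambda}) $, which is the claimed formula.

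For (4), the key point is that $ j_*\FF $ is, up to shift, the intermediate extension of the local system $ \FF $ on the curve, hence Verdier self-dual up to the duality on $ \FF $: $ D_{\overline{C}}(\FF_{\rho_\lambda}) \cong \FF_{\rho_\lambda^\vee}(1)[2] $. Poincaré--Verdier duality then identifies the $ \phi $-module $ H^i(\overline{C}, \FF_{\rho_\lambda}) $ with the $ F_\lambda $-dual of $ H^{2 - i}(\overline{C}, \FF_{\rho_\lambda^\vee})(1) $, and the usual rearrangement of the expression in (1) gives
$$ \LL(\rho_\lambda, T) = \Big( \prod_{i = 0}^2 \det(-\phi \mid H^i(\overline{C}, \FF_{\rho_\lambda}))^{(-1)^{i + 1}} \Big) \cdot T^{n(\rho_\lambda) - d(\rho_\lambda)} \cdot \LL(\rho_\lambda^\vee, (qT)^{-1}), $$
the exponent of $ T $ being $ -\chi(\overline{C}, \FF_{\rho_\lambda}) $ from (3). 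What remains --- and I expect this to be the principal difficulty --- is to prove that the cohomological constant above equals $ \epsilon(\rho_\lambda) = \prod_v \epsilon(\rho_{\lambda, v}, \psi_v, \mu_v) $: this is Laumon's product formula for the $ \epsilon $-constant of an $ \ell $-adic sheaf on a curve over a finite field (building on Deligne's local theory), and it is precisely here that the local $ \epsilon $-factor calculus of the previous subsections enters --- Proposition \ref{prop:unramifiedepsilon} and the resulting normalisation $ \epsilon(\sigma_\lambda, \psi_v, \mu_v) = 1 $ at the unramified $ v $ with $ \gamma(\psi_v) = 0 $ and $ \mu_v(\OO_v) = 1 $ --- to match the local terms and to see that the product is finite. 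Finally, for (5), if $ \rho_\lambda $ is $ \iota $-pure of weight $ w(\rho_\lambda) $ then $ \FF $ is lisse pure of weight $ w(\rho_\lambda) $ on the smooth curve $ \overline{C_0} $, and Deligne's Weil II shows that the interior cohomology $ H^1(\overline{C}, \FF_{\rho_\lambda}) = \operatorname{Im}\big( H^1_c(\overline{C_0}, \FF) \to H^1(\overline{C_0}, \FF) \big) $ is pure of weight $ w(\rho_\lambda) + 1 $; as $ H^0(\overline{C}, \FF_{\rho_\lambda}) $ and $ H^2(\overline{C}, \FF_{\rho_\lambda}) $ are then pure of the distinct weights $ w(\rho_\lambda) $ and $ w(\rho_\lambda) + 2 $, no cancellation occurs in (1), so $ N(\rho_\lambda, T) = \det(1 - \phi T \mid H^1(\overline{C}, \FF_{\rho_\lambda})) $ and all its roots have modulus $ q^{-(w(\rho_\lambda) + 1)/2} $.
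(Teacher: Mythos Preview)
Your approach is essentially identical to the paper's: both pass to the \'etale cohomology of $\FF_{\rho_\lambda}$ on $\overline{C}$ and invoke, in order, the Grothendieck--Lefschetz trace formula, Poincar\'e duality, Grothendieck--Ogg--Shafarevich, the functional equation with $\epsilon$-constants, and Deligne's purity theorem. One minor point on attribution: the product formula identifying the cohomological constant with $\prod_v \epsilon(\rho_{\lambda,v},\psi_v,\mu_v)$, which you flag as the principal difficulty and attribute to Laumon, is already the content of Deligne's \cite[Theorem 9.3]{Del73b} cited by the paper, so Laumon's later Fourier-theoretic argument is not needed here.
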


\pagebreak

\begin{proof}
Let $ H_\et^r(\overline{C}, \FF_{\rho_\lambda}) $ be the $ r $-th $ \lambda $-adic cohomology groups of $ \FF_{\rho_\lambda} $ for each $ r \in \{0, 1, 2\} $, and let $ P_r(\rho_\lambda, T) \coloneqq \det(1 - H_\et^r(\overline{C}, \FF_{\rho_\lambda})(\varphi_q) \cdot T) $ be the characteristic polynomials of $ q $-Frobenius $ \varphi_q \in \Gal(\overline{\F_q} / \F_q) $ acting on $ H_\et^r(\overline{C}, \FF_{\rho_\lambda}) $. Then set
$$ N(\rho_\lambda, T) \coloneqq P_1(\rho_\lambda, T), \qquad D(\rho_\lambda, T) \coloneqq P_0(\rho_\lambda, T) \cdot P_2(\rho_\lambda, T). $$
The first statement follows from the Grothendieck--Lefschetz trace formula \cite[Theorem 5.1]{Gro68}. The second statement follows from Poincar\'e duality \cite[Equation 3.2.6.2]{Del73a}. The third statement is the Grothendieck--Ogg--Shafarevich formula \cite[Theorem 7.1]{Gro77}. The fourth statement follows from Deligne's functional equation \cite[Theorem 9.3]{Del73b}. The fifth statement follows from Deligne's purity theorem \cite[Theorem 3.3.1]{Del80}. The main arguments with the required level of generality are also sketched by Ulmer \cite[Theorem 1.3.3 and Theorem 1.4.1]{Ulm11}.
\end{proof}

Theorem \ref{thm:weilconjectures} is central in the computation of $ \LL(\rho_\lambda, T) $, which will be explained now. Part (1) reduces its computation to that of $ N(\rho_\lambda, T) $ and $ D(\rho_\lambda, T) $. Now $ D(\rho_\lambda, T) $ is typically known, and in fact often trivial by part (2), so it remains to compute $ N(\rho_\lambda, T) $. Part (3) reduces the computation of its degree $ n(\rho_\lambda) $ to that of $ \ff(\rho_\lambda) $, which is simply a local calculation. This is sufficient to determine the coefficients of $ N(\rho_\lambda, T) $ by computing $ \LL_v(\rho_\lambda, T) $ for sufficiently many places $ v \in V_K $. Parts (4) and (5) are auxiliary in the computation of $ \LL(\rho_\lambda, T) $. Assuming that $ \rho_\lambda $ is self-dual of weight $ w(\rho_\lambda) $ and sign $ c(\rho_\lambda) \in \{\id, \cc\} $, then
$$ \LL(\rho_\lambda^\vee, (qT)^{-1}) = \LL(\rho_\lambda^{c(\rho_\lambda)} \otimes F_\lambda(w(\rho_\lambda)), (qT)^{-1}) = \LL(\rho_\lambda, (q^{w(\rho_\lambda) + 1}T)^{-1})^{c(\rho_\lambda)}, $$
so part (4) says that
$$ \LL(\rho_\lambda, T) = \epsilon(\rho_\lambda) \cdot T^{n(\rho_\lambda) - d(\rho_\lambda)} \cdot \LL(\rho_\lambda, (q^{w(\rho_\lambda) + 1}T)^{-1})^{c(\rho_\lambda)}, $$
which gives a relation on the coefficients of $ N(\rho_\lambda, T) $. In particular, if $ \epsilon(\rho_\lambda) $ is known, then this significantly reduces the number of places $ v \in V_K $ necessary in the previous computation. Otherwise, it enforces relations between $ \epsilon(\rho_\lambda) $ and the coefficients of $ N(\rho_\lambda, T) $ that allows for an independent computation of $ \epsilon(\rho_\lambda) $. Both parts (4) and (5) also serve as sanity checks to test for implementation bugs.

\begin{remark}
The condition that $ \rho_\lambda $ has no $ \overline{G_K} $-invariants, and hence Theorem \ref{thm:weilconjectures}(2), applies in many situations. Notably, if $ \rho_\lambda = \rho_{E, \ell} $ arises from a non-constant elliptic curve $ E $ over $ K $, then $ E(\overline{\F_q}(C)) $ is finitely generated by the Lang--N\'eron theorem, so $ E(\overline{\F_q}(C))[\ell^r] $ eventually stabilises for increasing $ r \in \N $, and hence $ \rho_{E, \ell} $ has no $ \overline{G_K} $-invariants. Note that this argument fails for an abelian variety $ A $ over $ K $, since its $ (\overline{\F_q}(C) / \overline{\F_q}) $-trace is not trivial in general \cite[Section 2]{Con06}.

Since $ I_v \subseteq \overline{G_K} $ for any place $ v \in V_K $, this also applies whenever $ \rho_\lambda $ has no $ I_v $-invariants for some place $ v \in V_K $. For instance, this applies when $ A $ has purely additive reduction at some place $ v \in V_K $, where the $ I_v $-invariant subgroup of $ \rho_\lambda = \rho_{A, \ell} $ is the identity component of the special fibre of its N\'eron model at $ v $ \cite[Lemma 2]{ST68}, which is trivial since the Tate module of the additive group is trivial.

More generally, a tensor product of $ \lambda $-adic representations, one of which with no $ I_v $-invariants at some place $ v \in V_K $, has no $ \overline{G_K} $-invariants if their Artin conductors are coprime. This is the case for twists of $ A $ by irreducible Artin representations $ \varrho $ such that $ (\ff(A), \ff(\varrho)) = 1 $ and $ \ff(\varrho) \ne 1 $. The latter condition applies to twists of $ A $ by primitive Dirichlet characters that factor through separable geometric extensions of the genus zero global function field $ \F_q(t) $, which cannot be everywhere unramified by the Riemann--Hurwitz formula \cite[Theorem 7.16]{Ros02}.
\end{remark}

\pagebreak

\section{Computing L-functions}
\label{sec:computingfunctions}

With the application of Theorem \ref{thm:weilconjectures} in mind, the setup in this section can be made completely abstract, albeit with suggestive notations. In what follows, let $ F $ be a field, and let $ V $ be a possibly infinite indexing set.

\begin{notation}
For any polynomial or power series $ P(T) \in F[[T]] $ and any $ i \in \N $, let $ P_i $ be the $ i $-th coefficient of $ P(T) $, and let $ [P(T)]_i \coloneqq \sum_{k = 0}^i P_kT^k \in 1 + T \cdot F[[T]] $.
\end{notation}

\begin{definition}
A \textbf{stratification} is a function $ \LL_\bullet(T) : V \to 1 + T \cdot F[T] $ such that
$$ V_i \coloneqq \{v \in V : \LL_v(T)_i \ne 0, \ [\LL_v(T)]_{i - 1} = 1\} $$
is finite for each $ i \in \N $. In this case, define the formal product
$$ \LL(T) \coloneqq \prod_{v \in V} \LL_v(T)^{-1} \in 1 + T \cdot F[[T]], $$
which is well-defined as each coefficient of $ \LL(T) $ is a finite combination of coefficients of $ \LL_v(T) $ for finitely many indices $ v \in V $. Note that the data of $ F $ and $ V $ are implicit.
\end{definition}

In what follows, functions with values in $ F $ or $ F[T] $ should be \textbf{computable}, in the na\"ive sense that they can be evaluated within reasonable time. For a stratification $ \LL_\bullet(T) : V \to 1 + T \cdot F[T] $, the running time of its evaluation at any $ v \in V $ should preferably be polynomial in the number of bits representing $ v $.

\begin{example}
If $ \rho_\lambda $ is an almost everywhere unramified $ \lambda $-adic representation of a global function field $ K $, then its formal local Euler factor $ \LL_\bullet(T) = \LL_\bullet(\rho_\lambda, T) $ is a stratification of $ V_K $, in which case $ \LL(T) = \LL(\rho_\lambda, T) $ is its formal L-function.
\end{example}

\subsection{Rationality}

Assume that $ \LL(T) $ is the quotient of some polynomial $ N(T) \in 1 + T \cdot F[T] $ of known degree $ n \in \N $ by some known polynomial $ D(T) \in 1 + T \cdot F[T] $ of degree $ d \in \N $. Then the coefficients of $ N(T) = \LL(T) \cdot D(T) $ can be determined completely by taking the product of $ \LL_v(T)^{-1} $ for sufficiently many indices $ v \in V $.

\begin{algorithm}
\label{alg:rationalfunction}
Input: $ (\LL_\bullet(T), D(T), n_0, n_1, n') $, where $ \LL_\bullet(T) : V \to 1 + T \cdot F[T] $ is a computable stratification, $ D(T) \in 1 + T \cdot F[T] $ is a polynomial, and $ n_0, n_1, n' \in \N $ are integers with $ n_0 \le n_1 \le n' $.
\begin{enumerate}
\item Initialise $ P(T) \coloneqq 1 $.
\item For $ i \in \{n_0, \dots, n_1\} $ and for $ v \in V_i $, replace $ P(T) \coloneqq [P(T) \cdot [\LL_v(T)]_{n'}]_{n'} $.
\item Replace $ P(T) \coloneqq [P(T)^{-1}]_{n'} $.
\item Return $ P(T) \coloneqq [P(T) \cdot [D(T)]_{n'}]_{n'} $.
\end{enumerate}
Output: a polynomial $ P(T) \in 1 + T \cdot F[T] $.
\end{algorithm}

\begin{proposition}
\label{prop:rationalfunction}
Let $ \LL_\bullet(T) : V \to 1 + T \cdot F[T] $ be a stratification with
$$ \LL(T) = \dfrac{N(T)}{D(T)}, $$
for some polynomials $ N(T), D(T) \in 1 + T \cdot F[T] $ of degrees $ n, d \in \N $ respectively, and let $ n_0, n_1, n' \in \N $ be such that $ \{0, 1\} \ni n_0 \le n_1 \le n' \le n $.
\begin{enumerate}
\item Algorithm \ref{alg:rationalfunction} with inputs $ (\LL_\bullet(T), D(T), n_0, n_1, n') $ outputs a polynomial $ P(T) \in 1 + T \cdot F[T] $ of degree $ n' $ such that $ [P(T)]_{n_1} = [N(T)]_{n_1} $.
\item If $ n_1' \in \N $ is such that $ n_1 < n_1' \le n' $, then Algorithm \ref{alg:rationalfunction} with inputs $ (\LL_\bullet(T), 1, n_1 + 1, n_1', n') $ outputs a polynomial $ Q(T) \in 1 + T \cdot F[T] $ of degree $ n' $ such that $ [P(T) \cdot Q(T)]_{n_1'} = [N(T)]_{n_1'} $.
\end{enumerate}
\end{proposition}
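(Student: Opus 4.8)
The plan is to pin down exactly which polynomial the algorithm produces, express it as a truncation of a \emph{finite} sub-product of $ \LL(T) $, and then check that the infinitely many omitted Euler factors do not disturb the low-degree coefficients. Write $ \ord(g) $ for the $ T $-adic valuation of $ g \in F[[T]] $, and for $ m \in \N $ put $ W_m \coloneqq \{v \in V : 1 \le \ord(\LL_v(T) - 1) \le m\} $, which is finite because $ \LL_\bullet(T) $ is a stratification. The first thing I would record is an elementary truncation calculus: for $ A, B \in 1 + T \cdot F[[T]] $ and integers $ m \le m' $ one has $ [[A]_{m'}]_m = [A]_m $, $ [[A]_{m'} \cdot [B]_{m'}]_{m'} = [AB]_{m'} $, and $ [[A]_{m'}^{-1}]_{m'} = [A^{-1}]_{m'} $ (using that $ A $ is a unit with constant term $ 1 $), and moreover $ [A]_m = [A^{-1}]_m = 1 $ whenever $ \ord(A - 1) > m $. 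All of these are immediate on reducing modulo $ T^{m' + 1} $, but it is worth isolating them, since every line of Algorithm \ref{alg:rationalfunction} is one such operation.

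Next I would unwind the algorithm for part (1). Since $ n_0 \in \{0, 1\} $, the index set $ \bigcup_{i = n_0}^{n_1} V_i $ swept by step (2) equals $ W_{n_1} $ (one checks $ V_0 = \varnothing $, so that allowing $ i = 0 $ in the range is harmless, while for $ i \ge 1 $ the stratum $ V_i $ is exactly $ \{v : \ord(\LL_v(T) - 1) = i\} $). Running the truncation calculus through the four steps, step (2) produces $ P(T) = [\prod_{v \in W_{n_1}} \LL_v(T)]_{n'} $, step (3) inverts it to $ [\prod_{v \in W_{n_1}} \LL_v(T)^{-1}]_{n'} $, and step (4) multiplies in $ D(T) $, so that the output is precisely
$$ P(T) = \Big[ D(T) \cdot \prod_{v \in W_{n_1}} \LL_v(T)^{-1} \Big]_{n'}, $$
a polynomial of degree at most $ n' $ lying in $ 1 + T \cdot F[T] $. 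On the other hand $ N(T) = \LL(T) D(T) = D(T) \cdot \prod_{v \in V} \LL_v(T)^{-1} $; splitting this (legitimate) product along $ V = W_{n_1} \sqcup (V \setminus W_{n_1}) $, every factor $ \LL_v(T)^{-1} $ with $ v \notin W_{n_1} $ has $ \ord(\LL_v(T)^{-1} - 1) > n_1 $, so their product is $ \equiv 1 \pmod{T^{n_1 + 1}} $. Hence $ [N(T)]_{n_1} = [D(T) \cdot \prod_{v \in W_{n_1}} \LL_v(T)^{-1}]_{n_1} = [P(T)]_{n_1} $, using $ n_1 \le n' $.

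For part (2) I would run the same unwinding on the new input, now with $ D(T) $ replaced by $ 1 $ and the index range $ \{n_1 + 1, \dots, n_1'\} $, which sweeps $ W_{n_1'} \setminus W_{n_1} $; this yields $ Q(T) = [\prod_{v \in W_{n_1'} \setminus W_{n_1}} \LL_v(T)^{-1}]_{n'} $, again a polynomial of degree at most $ n' $ in $ 1 + T \cdot F[T] $. Multiplying by the $ P(T) $ of part (1) and truncating at $ n_1' \le n' $ recombines the two index ranges, giving $ [P(T) Q(T)]_{n_1'} = [D(T) \cdot \prod_{v \in W_{n_1'}} \LL_v(T)^{-1}]_{n_1'} $, and the same ``omitted factors are $ \equiv 1 \pmod{T^{n_1' + 1}} $'' argument identifies this with $ [N(T)]_{n_1'} $.

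I expect the only genuine subtlety --- the one place where the hypotheses are used rather than pure bookkeeping --- to be the claim that $ \prod_{v \notin W_m} \LL_v(T)^{-1} \equiv 1 \pmod{T^{m + 1}} $. This rests on the stratification property (finiteness of each $ V_i $ makes the product a well-defined element of $ 1 + T \cdot F[[T]] $ whose coefficients in degrees $ 1, \dots, m $ receive no contribution from any $ v \notin W_m $) together with the normalisation $ n_0 \in \{0, 1\} $ (so that $ \bigcup_{i = n_0}^{n_1} V_i $ omits none of $ V_1, \dots, V_{n_1} $). Everything else is the truncation calculus of the first paragraph, applied mechanically.
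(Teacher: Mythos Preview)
Your proof is correct and follows essentially the same approach as the paper: identify the algorithm's output as the $n'$-truncation of $D(T)\cdot\prod_{v}\LL_v(T)^{-1}$ over the finite index set $\bigcup_{i=n_0}^{n_1}V_i$, then use that each omitted factor is $\equiv 1\pmod{T^{n_1+1}}$ (and $V_0=\varnothing$) to match the low-degree coefficients with $N(T)$. Your write-up is more careful in isolating the ``truncation calculus'' that the paper leaves implicit, but the argument is the same.
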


\pagebreak

\begin{proof}
Since $ V_0 = \emptyset $ and $ \LL_v(T)^{-1} \equiv \LL_v(T) \equiv 1 \mod T^{n_1 + 1} $ for any $ v \in V_i $ whenever $ i > n_1 $, Algorithm \ref{alg:rationalfunction} with inputs $ (\LL_\bullet(T), D(T), n_0, n_1, n') $ computes
$$ P(T) = [D(T)]_{n'} \cdot \prod_{i = n_0}^{n_1} \prod_{v \in V_i} [\LL_v(T)^{-1}]_{n'} \equiv D(T) \cdot \LL(T) \mod T^{n_1 + 1}, $$
so the first statement follows immediately. Multiplying $ P(T) $ with the output of Algorithm \ref{alg:rationalfunction} with inputs $ (\LL_\bullet(T), 1, n_1 + 1, n_1', n') $ increases the range of the product to $ i \in \{n_0, \dots, n_1'\} $, so the second statement also follows.
\end{proof}

In particular, Algorithm \ref{alg:rationalfunction} with inputs $ (\LL_\bullet(T), D(T), 1, n, n) $ outputs $ N(T) $, but the extra degrees of freedom on the last three integers will be useful later.

\subsection{Functional equation}

Now in addition to rationality, assume further that $ \LL(T) $ satisfies a functional equation of the form $ \LL(T) = \epsilon \cdot T^{n - d} \cdot \LL(f / T)^c $, for some known field elements $ \epsilon, f \in F $ and some known field endomorphism $ c : F \to F $.

\begin{algorithm}
\label{alg:computecoefficients}
Input: $ (\LL_\bullet(T), D(T), m, n_1, n', f, c) $, where $ \LL_\bullet(T) : V \to 1 + T \cdot F[T] $ is a computable stratification, $ D(T) \in 1 + T \cdot F[T] $ is a polynomial, $ m, n_1, n' \in \N $ are integers with $ m \le n_1 \le n' $, $ f \in F $ is a field element, and $ c : F \to F $ is a computable field endomorphism.
\begin{enumerate}
\item Run Algorithm \ref{alg:rationalfunction} with inputs $ (\LL_\bullet(T), D(T), 1, n_1, n') $ and output $ P(T) $.
\item Set $ d \coloneqq \deg D(T) $.
\item For $ k \in \{0, \dots, m\} $:
\begin{enumerate}
\item Set $ h \coloneqq 1 $ and $ k' \coloneqq \min(d, k) $, and initialise $ M_k \coloneqq 0 $.
\item For $ i \in \{0, \dots, k\} $:
\begin{enumerate}
\item If $ 0 \le k - i \le k' $, then replace $ M_k \coloneqq M_k + P_i^c \cdot D_{d - (k - i)} \cdot h $.
\item If $ 1 \le i \le k' $, then replace $ M_k \coloneqq M_k - M_{k - i} \cdot D_i^c \cdot h $.
\item Replace $ h \coloneqq h \cdot f $.
\end{enumerate}
\end{enumerate}
\item Return $ (P(T), M_0, \dots, M_m) $.
\end{enumerate}
Output: a polynomial $ P(T) \in 1 + T \cdot F[T] $ and field elements $ M_0, \dots, M_m \in F $.
\end{algorithm}

\begin{remark}
The number of additions in Algorithm \ref{alg:computecoefficients} can be optimised when $ k \le d $, in which case the ranges of both conditionals overlap for all $ i \in \{1, \dots, k\} $. In any case, it might be interesting to obtain a closed form expression for all $ M_k $, in contrast to a recursive procedure. Note that when $ D(T) = 1 $, which is often the case, Algorithm \ref{alg:computecoefficients} computes the simple relation $ M_k = P_k^cf^k $ for all $ k \in \{0, \dots, m\} $.
\end{remark}

\begin{proposition}
\label{prop:computecoefficients}
Let $ \LL_\bullet(T) : V \to 1 + T \cdot F[T] $ be a stratification with
$$ \LL(T) = \dfrac{N(T)}{D(T)}, $$
for some polynomials $ N(T), D(T) \in 1 + T \cdot F[T] $ of degrees $ n, d \in \N $ respectively, satisfying a functional equation
$$ \LL(T) = \epsilon \cdot T^{n - d} \cdot \LL(f / T)^c, $$
for some field elements $ \epsilon, f \in F $ and some field endomorphism $ c : F \to F $, and let $ m, n' \in \N $ be such that $ m \le \lfloor n / 2\rfloor \le n' \le n $. Then Algorithm \ref{alg:computecoefficients} with inputs $ (\LL_\bullet(T), D(T), m, \lfloor n / 2\rfloor, n', f, c) $ outputs a polynomial $ P(T) \in 1 + T \cdot F[T] $ of degree $ n' $ such that $ [P(T)]_{\lfloor n / 2\rfloor} = [N(T)]_{\lfloor n / 2\rfloor} $ and field elements $ M_k \in F $ such that $ N_{n - k} = \epsilon M_k $ for all $ k \in \{0, \dots, m\} $.
\end{proposition}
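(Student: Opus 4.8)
The plan is as follows. The first assertion is immediate from what is already in place: step~(1) of Algorithm~\ref{alg:computecoefficients} runs Algorithm~\ref{alg:rationalfunction} on the input $(\LL_\bullet(T), D(T), 1, \lfloor n/2\rfloor, n')$, and since $\{0,1\} \ni 1 \le \lfloor n/2\rfloor \le n' \le n$ (the case $n \le 1$ being vacuous, as $P(T), N(T) \in 1 + T\cdot F[T]$ both have constant term $1$), Proposition~\ref{prop:rationalfunction}(1) hands us a polynomial $P(T) \in 1 + T\cdot F[T]$ of degree $n'$ with $[P(T)]_{\lfloor n/2\rfloor} = [N(T)]_{\lfloor n/2\rfloor}$. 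In particular $N_j = P_j$ for every $j \le \lfloor n/2\rfloor$, which I will use freely below.

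For the claim about the $M_k$, I would first recast the functional equation as a polynomial identity and then compare coefficients. Writing $\LL(f/T)^c = N^c(f/T)/D^c(f/T)$, where $N^c$ and $D^c$ are $N$ and $D$ with $c$ applied coefficientwise, the functional equation becomes $N(T)/D(T) = \epsilon\, T^{n-d}\, N^c(f/T)/D^c(f/T)$; multiplying through by $T^d D(T) D^c(f/T)$ turns this into
$$ N(T)\cdot\widetilde D(T) \;=\; \epsilon\cdot D(T)\cdot\widetilde N(T), \qquad \widetilde D(T) \coloneqq \sum_{j=0}^{d} D_j^c f^j\, T^{d-j}, \quad \widetilde N(T) \coloneqq \sum_{i=0}^{n} N_i^c f^i\, T^{n-i}, $$
an identity of polynomials of degree $n+d$ (leading coefficients $N_n$ and $\epsilon D_d$). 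I would then extract the coefficient of $T^{\,n+d-k}$ from both sides, for $k \in \{0,\dots,m\}$ and with $k' \coloneqq \min(d,k)$; using $D_0 = 1$ to isolate the leading term on the left, this reads
$$ N_{n-k} \;=\; \epsilon\sum_{i=0}^{k'} D_{d-i}\, N_{k-i}^c\, f^{k-i} \;-\; \sum_{j=1}^{k'} N_{n-k+j}\, D_j^c\, f^j, $$
where, since $1 \le j \le k' \le k$, every $N$-index appearing besides $N_{n-k}$ lies either in $\{0,\dots,k\}$ or in $\{n-k+1,\dots,n\}$.

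Next I would induct on $k$. For the first family of terms, $k-i \le k \le m \le \lfloor n/2\rfloor$, so $N_{k-i} = P_{k-i}$; for the second, I would rewrite $N_{n-k+j} = N_{\,n-(k-j)}$ with $0 \le k-j \le k-1 < m$ and invoke the inductive hypothesis to get $N_{\,n-(k-j)} = \epsilon M_{k-j}$, these $M_{k-j}$ being exactly the quantities produced in earlier passes of the outer loop of step~(3). Substituting and cancelling $\epsilon$ yields
$$ N_{n-k} \;=\; \epsilon\left(\sum_{i=0}^{k'} P_{k-i}^c\, D_{d-i}\, f^{k-i} \;-\; \sum_{j=1}^{k'} M_{k-j}\, D_j^c\, f^j\right), $$
and it then remains to match the bracketed quantity with the $M_k$ assembled by step~(3): after reindexing the first sum by $i \mapsto k-i$, its terms are precisely those added by step~(3)(b)(i) under the guard $0 \le k-i \le k'$ with running value $h = f^i$, while the second sum is precisely what step~(3)(b)(ii) subtracts under the guard $1 \le i \le k'$; the base case $k=0$ is $N_n = \epsilon D_d = \epsilon M_0$, using $P_0 = 1$. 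This gives $N_{n-k} = \epsilon M_k$ for all $k \in \{0,\dots,m\}$.

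The only genuinely delicate point will be this coefficient bookkeeping — confirming that the index ranges split cleanly into ``low'' coefficients $N_0,\dots,N_k$, readable off $P(T)$ precisely because $k \le m \le \lfloor n/2\rfloor$, and ``high'' coefficients $N_{n-k},\dots,N_n$, of which all but the sought $N_{n-k}$ reduce to $\epsilon$ times an already-computed $M_{k-j}$ — since this is exactly where the hypothesis $m \le \lfloor n/2\rfloor$ and the truncation index $k' = \min(d,k)$ are used, and where an off-by-one in a reindexing would silently break the recursion. Everything else is formal manipulation of polynomials and power series and should go through routinely.
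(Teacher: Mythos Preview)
Your proposal is correct and follows essentially the same approach as the paper's proof: both cross-multiply the functional equation to a polynomial identity (your $N(T)\widetilde D(T)=\epsilon\,D(T)\widetilde N(T)$ is exactly what the paper obtains after clearing denominators), extract the coefficient at degree $n+d-k$, isolate $N_{n-k}$ using $D_0=1$, and then induct on $k$ after replacing low-index $N_i$ by $P_i$ via Proposition~\ref{prop:rationalfunction}(1). Your explicit introduction of $\widetilde N$ and $\widetilde D$ and the careful handling of the index range $k'=\min(d,k)$ make the matching with step~(3) of the algorithm slightly more transparent, but the argument is the same.
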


\pagebreak

\begin{proof}
The functional equation says for each $ r \in \{-d, \dots, n\} $,
$$ \underset{i - j = r}{\sum_{i = 0}^n \sum_{j = 0}^d} N_iD_j^cf^j = \underset{(n - i) - (d - j) = r}{\sum_{i = 0}^n \sum_{j = 0}^d} \epsilon N_i^cD_jf^i. $$
By expressing $ j \coloneqq i - r $ in the left hand side, with a further replacement $ i - r \mapsto i $, and by expressing $ j \coloneqq d - ((n - r) - i) $ in the right hand side,
$$ \sum_{i = -\min(0, r)}^{\min(d, n - r)} N_{r + i}D_i^cf^i = \sum_{(n - r) - i = -\min(0, r)}^{\min(d, n - r)} \epsilon N_i^cD_{d - ((n - r) - i)}f^i. $$
In particular, setting $ r \coloneqq n - k $ for each $ k \in \{0, \dots, n\} $, a rearrangement gives
$$ N_{n - k} = \sum_{k - i = 0}^{\min(d, k)} \epsilon N_i^cD_{d - (k - i)}f^i - \sum_{i = 1}^{\min(d, k)} N_{n - (k - i)}D_i^cf^i. $$
By Proposition \ref{prop:rationalfunction}(1), step (1) of Algorithm \ref{alg:computecoefficients} computes a polynomial $ P(T) \in 1 + T \cdot F[T] $ of degree $ n $ such that $ P_i = N_i $ for all $ i \in \{0, \dots, \lfloor n / 2\rfloor\} $, and in particular for all $ i \in \{0, \dots, k\} $ and all $ k \in \{0, \dots, m\} $. The remaining steps then compute
$$ M_k \coloneqq \sum_{k - i = 0}^{\min(d, k)} P_i^cD_{d - (k - i)}f^i - \sum_{i = 1}^{\min(d, k)} M_{k - i}D_i^cf^i, $$
so the result follows by induction.
\end{proof}

Algorithm \ref{alg:computecoefficients} removes the need to compute $ V_{\lfloor n / 2\rfloor + 1}, \dots, V_n $.

\begin{algorithm}
\label{alg:functionalequation}
Input: $ (\LL_\bullet(T), D(T), n, \epsilon, f, c) $, where $ \LL_\bullet(T) : V \to 1 + T \cdot F[T] $ is a computable stratification, $ D(T) \in 1 + T \cdot F[T] $ is a polynomial, $ n \in \N $ is an integer, $ e, f \in F $ are field elements, and $ c : F \to F $ is a computable field endomorphism.
\begin{enumerate}
\item Set $ n_1 = \lfloor n / 2\rfloor $ and $ m \coloneqq n - n_1 - 1 $.
\item Run Algorithm \ref{alg:computecoefficients} with inputs $ (\LL_\bullet(T), D(T), m, n_1, n_1, f, c) $ and output $ P(T) $ and $ (M_0, \dots, M_m) $.
\item For $ k \in \{n - m, \dots, n\} $, replace $ P(T) \coloneqq P(T) + \epsilon M_{n - k}T^k $.
\item Return $ P(T) $.
\end{enumerate}
Output: a polynomial $ P(T) \in 1 + T \cdot F[T] $.
\end{algorithm}

\begin{proposition}
\label{prop:functionalequation}
Let $ \LL_\bullet(T) : V \to 1 + T \cdot F[T] $ be a stratification with
$$ \LL(T) = \dfrac{N(T)}{D(T)}, $$
for some polynomials $ N(T), D(T) \in 1 + T \cdot F[T] $ of degrees $ n, d \in \N $ respectively, satisfying a functional equation
$$ \LL(T) = \epsilon \cdot T^{n - d} \cdot \LL(f / T)^c, $$
for some field elements $ \epsilon, f \in F $ and some field endomorphism $ c : F \to F $. Then Algorithm \ref{alg:functionalequation} with inputs $ (\LL_\bullet(T), D(T), n, \epsilon, f, c) $ outputs $ N(T) $.
\end{proposition}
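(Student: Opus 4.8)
The plan is to show that Algorithm \ref{alg:functionalequation} reconstructs $N(T)$ by splitting its coefficient vector at the midpoint $n_1 = \lfloor n/2\rfloor$: the bottom half $N_0, \dots, N_{n_1}$ is produced directly by the rationality computation invoked in step (2), while the top half $N_{n_1 + 1}, \dots, N_n$ is recovered from the functional equation via the auxiliary elements $M_0, \dots, M_m$ that step (2) also returns, and then appended in step (3).

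First I would check that the call to Algorithm \ref{alg:computecoefficients} in step (2), which uses $n' = n_1$ and $m = n - n_1 - 1$, meets the hypotheses of Proposition \ref{prop:computecoefficients}, namely $m \le \lfloor n/2\rfloor \le n' \le n$. The inequalities $\lfloor n/2\rfloor \le n' = \lfloor n/2\rfloor \le n$ are immediate, and $m \le \lfloor n/2\rfloor$ follows from a one-line parity check after writing $m = n - \lfloor n/2\rfloor - 1 = \lceil n/2\rceil - 1$ (equality holding when $n$ is odd). Proposition \ref{prop:computecoefficients} then provides a polynomial $P(T) \in 1 + T \cdot F[T]$ of degree $n_1$ with $P_i = N_i$ for all $i \in \{0, \dots, n_1\}$, together with field elements $M_0, \dots, M_m$ satisfying $N_{n - k} = \epsilon M_k$ for every $k \in \{0, \dots, m\}$.

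Next I would track step (3). Before step (3) the polynomial $P(T)$ has degree $n_1$, so its coefficients in degrees $n_1 + 1, \dots, n$ all vanish; and since $m = n - n_1 - 1$, the loop index $k$ runs over $\{n - m, \dots, n\} = \{n_1 + 1, \dots, n\}$, which is disjoint from the range already populated in step (2). For each such $k$, substituting $k \mapsto n - k$ into $N_{n - k} = \epsilon M_k$ gives $N_k = \epsilon M_{n - k}$, so the update $P(T) \mapsto P(T) + \epsilon M_{n - k} T^k$ sets the degree-$k$ coefficient of $P(T)$ to $N_k$. Since $\{0, \dots, n_1\}$ and $\{n_1 + 1, \dots, n\}$ partition $\{0, \dots, n\}$, the resulting polynomial satisfies $P_i = N_i$ for all $i \in \{0, \dots, n\}$ and has degree $n$, hence equals $N(T)$.

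There is no genuine obstacle once Proposition \ref{prop:computecoefficients} is available: the content is entirely in verifying that the two index ranges tile $\{0, \dots, n\}$ with neither gap nor overlap. The only point to watch is the degenerate case of small $n$ (say $n \le 1$), where $m$ equals $0$ or is negative and the loops in steps (2)--(3) become short or empty; there the bottom half already exhausts the coefficients of $N(T)$, so the conclusion holds a fortiori.
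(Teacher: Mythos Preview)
Your proposal is correct and follows essentially the same approach as the paper: invoke Proposition \ref{prop:computecoefficients} to obtain the bottom half of $N(T)$ and the relations $N_{n-k} = \epsilon M_k$, then observe that step (3) fills in the top half. Your version is simply more explicit about verifying the hypothesis $m \le \lfloor n/2\rfloor$ and about the two index ranges partitioning $\{0,\dots,n\}$, which the paper leaves implicit.
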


\begin{proof}
By Proposition \ref{prop:computecoefficients}, step (2) of Algorithm \ref{alg:functionalequation} computes the polynomial $ P(T) = \sum_{i = 0}^{\lfloor n / 2\rfloor} N_iT^i $ and field elements $ M_k \in F $ such that $ N_k = \epsilon M_{n - k} $ for all $ k \in \{\lfloor n / 2\rfloor + 1, \dots, n\} $. The remaining steps then compute $ N(T) = P(T) + \sum_{k = \lfloor n / 2\rfloor + 1}^n \epsilon M_{n - k}T^k $, so the result follows.
\end{proof}

\pagebreak

\subsection{\texorpdfstring{$ \epsilon $-}{epsilon }factors}

In some cases, the field element $ \epsilon $ cannot be provided as an input. Algorithm \ref{alg:computecoefficients} gives an independent way to compute it.

\begin{algorithm}
\label{alg:computeepsilon}
Input: $ (\LL_\bullet(T), D(T), n, f, c) $, where $ \LL_\bullet(T) : V \to 1 + T \cdot F[T] $ is a computable stratification, $ D(T) \in 1 + T \cdot F[T] $ is a polynomial, $ n \in \N $ is an integer, $ f \in F $ is a field element, and $ c : F \to F $ is a computable field endomorphism.
\begin{enumerate}
\item Set $ m \coloneqq n_1 \coloneqq \lceil n / 2\rceil $.
\item Run Algorithm \ref{alg:computecoefficients} with inputs $ (\LL_\bullet(T), D(T), m, n_1, n, f, c) $ and output $ P(T) $ and $ (M_0, \dots, M_m) $.
\item Choose $ k \in \{n - m, \dots, n\} $ minimal such that $ M_{n - k} \ne 0 $.
\item If $ n_1 < k $, then run Algorithm \ref{alg:rationalfunction} with inputs $ (\LL_\bullet(T), 1, n_1 + 1, k, k) $ and output $ Q(T) $.
\item Set $ R(T) \coloneqq [P(T) \cdot Q(T)]_k $.
\item Return $ \epsilon \coloneqq R_k / M_{n - k} $.
\end{enumerate}
Output: a field element $ \epsilon \in F $.
\end{algorithm}

\begin{proposition}
\label{prop:computeepsilon}
Let $ \LL_\bullet(T) : V \to 1 + T \cdot F[T] $ be a stratification with
$$ \LL(T) = \dfrac{N(T)}{D(T)}, $$
for some polynomials $ N(T), D(T) \in 1 + T \cdot F[T] $ of degrees $ n, d \in \N $ respectively, satisfying a functional equation
$$ \LL(T) = \epsilon \cdot T^{n - d} \cdot \LL(f / T)^c, $$
for some field elements $ \epsilon, f \in F $ and some field endomorphism $ c : F \to F $. Then Algorithm \ref{alg:functionalequation} with inputs $ (\LL_\bullet(T), D(T), n, f, c) $ outputs $ \epsilon $.
\end{proposition}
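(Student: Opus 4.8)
The plan is to track the coefficients produced at each step of Algorithm \ref{alg:computeepsilon} and to extract $ \epsilon $ from a single coefficient identity, in the spirit of the proof of Proposition \ref{prop:computecoefficients}. Write $ n_1 = m = \lceil n / 2 \rceil $ as in step (1), let $ P(T) $ and $ M_0, \dots, M_m $ be the outputs of step (2), and take $ Q(T) = 1 $ by convention if step (4) is skipped.

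First I would establish the analogue of Proposition \ref{prop:computecoefficients} for step (2). That proposition has middle argument $ \lfloor n / 2 \rfloor $ rather than $ \lceil n / 2 \rceil $, so it cannot be cited verbatim; but its proof only invokes Proposition \ref{prop:rationalfunction}(1) — which here shows that the call to Algorithm \ref{alg:rationalfunction} inside step (2) returns $ P(T) \in 1 + T \cdot F[T] $ of degree $ n $ with $ P_i = N_i $ for $ i \in \{0, \dots, n_1\} $ — followed by an induction on $ k $ comparing the recursion defining $ M_k $ with the identity for $ N_{n - k} $ obtained from the functional equation; since that recursion only refers to $ P_i $ with $ i \le k \le m = n_1 $, where $ P_i = N_i $, the induction carries over word for word. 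Thus step (2) produces $ P(T) $ with $ P_i = N_i $ for $ i \le \lceil n / 2 \rceil $ and field elements $ M_k $ with $ N_{n - k} = \epsilon M_k $ for all $ k \in \{0, \dots, \lceil n / 2 \rceil\} $.

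Next I would check step (3): as $ k $ runs over $ \{n - m, \dots, n\} = \{\lfloor n / 2 \rfloor, \dots, n\} $, the index $ n - k $ runs over $ \{0, \dots, \lceil n / 2 \rceil\} $, so $ M_{n - k} $ is among the $ M_j $ just computed; since $ M_0 $ is the leading coefficient $ D_d \ne 0 $ of $ D(T) $, the choice $ k = n $ is always admissible, so a minimal $ k $ with $ M_{n - k} \ne 0 $ exists, and for it $ N_k = \epsilon M_{n - k} $ with $ M_{n - k} \ne 0 $. It then remains to show that $ R(T) = [P(T) \cdot Q(T)]_k $ from step (5) has $ R_k = N_k $, for then step (6) returns $ R_k / M_{n - k} = N_k / M_{n - k} = \epsilon $. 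If $ n_1 \ge k $, this is immediate: $ Q(T) = 1 $ and $ R(T) = [P(T)]_k $, so $ R_k = P_k = N_k $ as $ k \le n_1 = \lceil n / 2 \rceil $. If $ n_1 < k $, I would apply Proposition \ref{prop:rationalfunction}(2) with its parameters $ (n_0, n_1, n', n_1') = (1, \lceil n / 2 \rceil, n, k) $ — legitimate since $ n_1 < k \le n $, and its base call $ (\LL_\bullet(T), D(T), 1, \lceil n / 2 \rceil, n) $ is exactly the one made inside step (2), so that its $ P(T) $ coincides with ours — to get $ [P(T) \cdot \widetilde{Q}(T)]_k = [N(T)]_k $, where $ \widetilde{Q}(T) $ is the output of Algorithm \ref{alg:rationalfunction} on $ (\LL_\bullet(T), 1, n_1 + 1, k, n) $; since the intermediate truncations to degree $ n $ in Algorithm \ref{alg:rationalfunction} may be replaced by truncations to degree $ k \le n $ without affecting the result modulo $ T^{k + 1} $, the actual output $ Q(T) $ of step (4) is $ [\widetilde{Q}(T)]_k $, whence $ R_k = (P(T) \cdot \widetilde{Q}(T))_k = N_k $.

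I expect the main obstacle to be the bookkeeping in the last step: reconciling the call in step (4), whose final argument is $ k $ rather than $ n $, with the hypotheses of Proposition \ref{prop:rationalfunction}(2), and cleanly handling the degenerate branch $ n_1 \ge k $ (which occurs precisely when $ k \in \{\lfloor n / 2 \rfloor, \lceil n / 2 \rceil\} $). A smaller point, already flagged, is that Proposition \ref{prop:computecoefficients} has to be re-derived rather than cited directly, because of the $ \lceil n / 2 \rceil $ versus $ \lfloor n / 2 \rfloor $ mismatch in the value of $ m $.
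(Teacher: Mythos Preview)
Your proposal is correct and follows essentially the same approach as the paper's proof: apply the content of Proposition~\ref{prop:computecoefficients} to step~(2), observe that $M_0 = D_d \ne 0$ so step~(3) terminates, and then invoke Proposition~\ref{prop:rationalfunction}(2) to conclude $R_k = N_k = \epsilon M_{n-k}$. The paper's proof is terser and simply cites those two propositions directly, glossing over the three technicalities you carefully flag (the $\lceil n/2 \rceil$ versus $\lfloor n/2 \rfloor$ mismatch in the hypotheses of Proposition~\ref{prop:computecoefficients}, the degenerate branch $n_1 \ge k$ where $Q(T)$ is never assigned, and the last argument $k$ rather than $n$ in the call at step~(4)); your treatment of these is sound and arguably more precise than the paper's.
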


\begin{proof}
By Proposition \ref{prop:computecoefficients}, step (2) of Algorithm \ref{alg:computeepsilon} computes a polynomial $ P(T) \in 1 + T \cdot F[T] $ of degree $ n $ such that $ P_i = N_i $ for all $ i \in \{0, \dots, \lceil n / 2\rceil\} $ and field elements $ M_k \in F $ such that $ N_k = \epsilon M_{n - k} $ for all $ k \in \{\lfloor n / 2\rfloor, \dots, n\} $, and step (3) terminates for some $ k \in \{n - m, \dots, n\} $ since $ M_0 = D_d \ne 0 $. By Proposition \ref{prop:rationalfunction}(2), the remaining steps compute a polynomial $ R(T) \in 1 + T \cdot F[T] $ of degree $ k $ such that $ R_k = N_k = \epsilon M_{n - k} $, so the result follows.
\end{proof}

\subsection{\texorpdfstring{$ \lambda $}{l}-adic representations}

Let $ \rho_\lambda $ be an almost everywhere unramified $ \lambda $-adic representation of $ K = \F_q(C) $, where $ \lambda \nmid q $ is a fixed prime of a coefficient number field $ F $. Then $ \LL(\rho_\lambda, T) $ is a rational function in $ F_\lambda(T) $ by Theorem \ref{thm:weilconjectures}(1), so the following is immediate from Proposition \ref{prop:rationalfunction}(1).

\begin{corollary}
\label{cor:rationalfunction}
Let $ \rho_\lambda $ be an almost everywhere unramified $ \lambda $-adic representation of a global function field $ K $. Then Algorithm \ref{alg:rationalfunction} outputs $ N(\rho_\lambda, T) \in 1 + T \cdot F_\lambda[T] $ with inputs
$$ (\LL_\bullet(\rho_\lambda, T), \ D(\rho_\lambda, T), \ 1, \ n(\rho_\lambda), \ n(\rho_\lambda)). $$
\end{corollary}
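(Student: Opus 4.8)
The plan is to derive this corollary by feeding the concrete data of $ \rho_\lambda $ into the abstract Proposition \ref{prop:rationalfunction}(1). First I would recall, as in the example following the definition of a stratification, that the formal local Euler factors assemble into a computable stratification $ \LL_\bullet(\rho_\lambda, T) : V_K \to 1 + T \cdot F_\lambda[T] $ of the index set $ V = V_K $, whose formal product is by construction the formal L-function $ \LL(\rho_\lambda, T) $; the finiteness of each $ V_i $ is exactly where the almost-everywhere-unramified hypothesis on $ \rho_\lambda $ enters. Then I would invoke Theorem \ref{thm:weilconjectures}(1) to produce the coprime polynomials $ N(\rho_\lambda, T), D(\rho_\lambda, T) \in 1 + T \cdot F_\lambda[T] $ of respective degrees $ n(\rho_\lambda) $ and $ d(\rho_\lambda) $ with $ \LL(\rho_\lambda, T) = N(\rho_\lambda, T) / D(\rho_\lambda, T) $. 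This puts us in the exact setting of Proposition \ref{prop:rationalfunction}, taking $ F = F_\lambda $, $ N(T) = N(\rho_\lambda, T) $, $ D(T) = D(\rho_\lambda, T) $, $ n = n(\rho_\lambda) $ and $ d = d(\rho_\lambda) $.

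Next I would apply Proposition \ref{prop:rationalfunction}(1) with the auxiliary integers $ n_0 = 1 $ and $ n_1 = n' = n(\rho_\lambda) $, which satisfy $ \{0, 1\} \ni n_0 \le n_1 \le n' \le n $. It guarantees that Algorithm \ref{alg:rationalfunction} on the inputs $ (\LL_\bullet(\rho_\lambda, T), D(\rho_\lambda, T), 1, n(\rho_\lambda), n(\rho_\lambda)) $ outputs a polynomial $ P(T) \in 1 + T \cdot F_\lambda[T] $ of degree $ n(\rho_\lambda) $ with $ [P(T)]_{n(\rho_\lambda)} = [N(\rho_\lambda, T)]_{n(\rho_\lambda)} $. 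Since both $ P(T) $ and $ N(\rho_\lambda, T) $ have degree at most $ n(\rho_\lambda) $, truncation at that degree is the identity on each side, so $ P(T) = N(\rho_\lambda, T) $, which is the claim.

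I do not expect a genuine obstacle, since all the substance is already packaged in Theorem \ref{thm:weilconjectures} and Proposition \ref{prop:rationalfunction} and the corollary is a matter of lining up hypotheses and truncation degrees. The one point meriting a remark is the degenerate case $ n(\rho_\lambda) = 0 $: then the constraint $ n_0 = 1 \le n_1 = 0 $ of Proposition \ref{prop:rationalfunction} is not met, but in that case $ N(\rho_\lambda, T) = 1 $ and one checks directly that Algorithm \ref{alg:rationalfunction} returns $ 1 $, because the loop in step (2) is empty and steps (3)--(4) act trivially on $ P(T) = 1 $. The running-time estimate advertised in Theorem \ref{thm:mainresults}(1) is then a separate bookkeeping matter, coming from the fact that the number of places of $ K $ that must be enumerated to reach degree $ n(\rho_\lambda) $ grows roughly like $ q^{n(\rho_\lambda)} $.
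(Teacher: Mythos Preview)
Your proposal is correct and follows exactly the route the paper takes: the corollary is stated as ``immediate from Proposition \ref{prop:rationalfunction}(1)'' after noting that Theorem \ref{thm:weilconjectures}(1) supplies the rational form $ \LL(\rho_\lambda, T) = N(\rho_\lambda, T)/D(\rho_\lambda, T) $, and your choice $ n_0 = 1 $, $ n_1 = n' = n(\rho_\lambda) $ together with the truncation observation $ [P(T)]_{n(\rho_\lambda)} = P(T) $ is precisely the intended unwinding. One small misattribution: the finiteness of each $ V_i $ does not hinge on the almost-everywhere-unramified hypothesis, since $ \LL_v(\rho_\lambda, T) \in 1 + T^{\deg v} F_\lambda[T] $ forces $ V_i \subseteq \{v : \deg v \le i\} $, which is finite for any global function field; the unramified hypothesis is instead what makes Theorem \ref{thm:weilconjectures} applicable.
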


For many explicit $ \lambda $-adic representations $ \rho_\lambda $ of a global function field $ K = \F_q(C) $, the polynomial $ D(\rho_\lambda, T) $ and the integer $ n(\rho_\lambda) $ are easily computable, and $ \LL_v(\rho_\lambda, T) $ should be computable with a running time of $ O(b_{\rho_\lambda}(\deg v)) $ for some function $ b_{\rho_\lambda} : \N \to \N $ that depends only on $ \deg v $ rather than $ v $ itself. For instance, when $ \rho_\lambda = \rho_{E, \ell} $ arises from an elliptic curve $ E $ over $ K $, computing the only non-trivial coefficient of $ \LL_v(\rho_{E, \ell}, T) $ reduces to computing $ \#E(\F_{q^{\deg v}}) $ via Schoof's algorithm, which has a running time of $ \widetilde{O}((\deg v \cdot \log q)^4) $ in the number of operations on $ F = \Q $ \cite{Sch85}, while $ D(\rho_\lambda, T) $ and $ n(\rho_\lambda) $ can be determined by Tate's algorithm, which is very efficient.

\pagebreak

Let $ n_1 = n' = n(\rho_\lambda) $. The running time of Algorithm \ref{alg:rationalfunction} with the inputs of Corollary \ref{cor:rationalfunction} is clearly dominated by step (2), which involves computing $ \LL_v(T) $ and a multiplication modulo $ T^{n'} $. The latter can be done via the Sch\"onhage--Strassen multiplication algorithm with a running time of $ O(n' \cdot \log n' \cdot \log\log n') = \widetilde{O}(n') $ in the number of operations on $ F = F_\lambda $ \cite{SS71}. By the prime number theorem for global function fields \cite[Theorem 5.12]{Ros02}, there are $ O(q^i / i) $ places $ v \in V_K $ of degree $ \deg v = i $ for each $ i \in \{1, \dots, n_1\} $, so the total running time would be
$$ O\left(\sum_{i = 1}^{n_1} \dfrac{q^i}{i} \cdot (b_{\rho_\lambda}(i) + n' \cdot \log n' \cdot \log\log n')\right) = \widetilde{O}(q^{n_1} \cdot (b_{\rho_\lambda}(n_1) + n')). $$

\begin{remark}
While $ b_{\rho_\lambda}(n_1) $ usually dominates $ n' $, this is not always the case, especially if $ \LL_v(T) $ can be obtained from a lookup table. In practice, both are dominated by $ q^{n_1} $, so the total running time is roughly exponential in $ n_1 $. In other words, the running time is typically dominated by the enumeration of $ V_i $ for all $ i \in \{1, \dots, n_1\} $ rather than the actual multiplications of $ \LL_v(T) $ modulo $ T^{n'} $.
\end{remark}

Assuming that $ \rho_\lambda $ is self-dual of weight $ w(\rho_\lambda) $ and sign $ c(\rho_\lambda) \in \{\id, \cc\} $, the following is immediate from Theorem \ref{thm:weilconjectures}(4) and Proposition \ref{prop:functionalequation}.

\begin{corollary}
\label{cor:functionalequation}
Let $ \rho_\lambda $ be an almost everywhere unramified $ \lambda $-adic representation of a global function field $ K = \F_q(C) $ that is self-dual of weight $ w(\rho_\lambda) $ and sign $ c(\rho_\lambda) $. Then Algorithm \ref{alg:functionalequation} outputs $ N(\rho_\lambda, T) \in 1 + T \cdot F_\lambda[T] $ with inputs
$$ (\LL_\bullet(\rho_\lambda, T), \ D(\rho_\lambda, T), \ n(\rho_\lambda), \ \epsilon(\rho_\lambda), \ 1 / q^{w(\rho_\lambda) + 1}, \ c(\rho_\lambda)). $$
\end{corollary}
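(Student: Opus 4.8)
The plan is to deduce the statement directly from Theorem \ref{thm:weilconjectures}(4) and Proposition \ref{prop:functionalequation} by checking that the hypotheses of the latter are satisfied for the specified inputs. Recall from the Example in Section \ref{sec:computingfunctions} identifying $ \LL_\bullet(\rho_\lambda, T) $ as a stratification that the formal local Euler factors $ \LL_\bullet(\rho_\lambda, T) : V_K \to 1 + T \cdot F_\lambda[T] $ form a stratification whose associated formal product is the formal L-function $ \LL(\rho_\lambda, T) $, and that this stratification is computable in the relevant sense, as discussed after Corollary \ref{cor:rationalfunction}. By Theorem \ref{thm:weilconjectures}(1), one has $ \LL(\rho_\lambda, T) = N(\rho_\lambda, T) / D(\rho_\lambda, T) $ for coprime polynomials $ N(\rho_\lambda, T), D(\rho_\lambda, T) \in 1 + T \cdot F_\lambda[T] $ of degrees $ n(\rho_\lambda) $ and $ d(\rho_\lambda) $, with $ n(\rho_\lambda) $ computable from the degree formula of Theorem \ref{thm:weilconjectures}(3) and the local nature of $ \ff(\rho_\lambda) $.

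The first substantive step is to put the functional equation into the abstract shape required by Proposition \ref{prop:functionalequation}. Theorem \ref{thm:weilconjectures}(4) gives
$$ \LL(\rho_\lambda, T) = \epsilon(\rho_\lambda) \cdot T^{n(\rho_\lambda) - d(\rho_\lambda)} \cdot \LL(\rho_\lambda^\vee, (qT)^{-1}), $$
and self-duality of weight $ w(\rho_\lambda) $ and sign $ c(\rho_\lambda) $ yields $ \rho_\lambda^\vee \cong \rho_\lambda^{c(\rho_\lambda)} \otimes F_\lambda(w(\rho_\lambda)) $, hence, as already recorded after the proof of Theorem \ref{thm:weilconjectures},
$$ \LL(\rho_\lambda^\vee, (qT)^{-1}) = \LL(\rho_\lambda, (q^{w(\rho_\lambda) + 1}T)^{-1})^{c(\rho_\lambda)} = \LL(\rho_\lambda, f / T)^{c(\rho_\lambda)}, \qquad f \coloneqq 1 / q^{w(\rho_\lambda) + 1} \in F_\lambda. $$
Thus $ \LL(\rho_\lambda, T) $ satisfies $ \LL(T) = \epsilon \cdot T^{n - d} \cdot \LL(f / T)^c $ with $ \epsilon = \epsilon(\rho_\lambda) $, $ f = 1 / q^{w(\rho_\lambda) + 1} $, $ c = c(\rho_\lambda) $, $ n = n(\rho_\lambda) $, and $ d = d(\rho_\lambda) $, which is precisely the abstract hypothesis.

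It then only remains to note that $ c(\rho_\lambda) \in \{\id, \cc\} $ is a computable field endomorphism of $ F_\lambda $, so that all inputs to Proposition \ref{prop:functionalequation} are legitimate and computable. Applying that proposition to the stratification $ \LL_\bullet(\rho_\lambda, T) $ with inputs $ (\LL_\bullet(\rho_\lambda, T), D(\rho_\lambda, T), n(\rho_\lambda), \epsilon(\rho_\lambda), 1 / q^{w(\rho_\lambda) + 1}, c(\rho_\lambda)) $ shows that Algorithm \ref{alg:functionalequation} outputs $ N(\rho_\lambda, T) $, which is the assertion.

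I do not expect a genuine obstacle here: the mathematical content is entirely contained in Theorem \ref{thm:weilconjectures} and Proposition \ref{prop:functionalequation}, and the argument is pure bookkeeping. The only points requiring a moment's care are matching the Tate-twist substitution $ T \mapsto (q^{w(\rho_\lambda) + 1}T)^{-1} $ against the abstract substitution $ T \mapsto f / T $, and confirming that the coefficient-wise application of $ c(\rho_\lambda) $ used in the proof of Proposition \ref{prop:functionalequation} agrees with the twist $ (\cdot)^{c(\rho_\lambda)} $ appearing in the functional equation — both of which are immediate once one unwinds the definition of $ \LL(\rho_\lambda, f / T)^{c(\rho_\lambda)} $.
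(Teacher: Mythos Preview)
Your proposal is correct and follows exactly the paper's approach: the corollary is stated there as ``immediate from Theorem \ref{thm:weilconjectures}(4) and Proposition \ref{prop:functionalequation},'' and your argument simply spells out the identification $f = 1/q^{w(\rho_\lambda)+1}$ and $c = c(\rho_\lambda)$ that makes the abstract functional equation hypothesis hold. There is nothing to add.
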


Let $ n = n(\rho_\lambda) $ and $ n_1 = \lfloor n / 2\rfloor $. Step (2) of Algorithm \ref{alg:functionalequation} with the inputs of Corollary \ref{cor:functionalequation} reduces to Algorithm \ref{alg:computecoefficients} with inputs $ (\LL_\bullet(\rho_\lambda, T), D(\rho_\lambda, T), n - n_1 - 1, n_1, n_1, 1 / q^{w(\rho_\lambda) + 1}, c(\rho_\lambda)) $. The running time of Algorithm \ref{alg:computecoefficients} is in turn dominated by its step (1), which has a running time of $ \widetilde{O}(q^{n_1} \cdot (b_{\rho_\lambda}(n_1) + n_1)) $ as in the previous discussion, since the remaining steps have a total running time of $ O((n - n_1 - 1)^2) = O(n_1^2) $. Thus Algorithm \ref{alg:functionalequation} also has a total running time of $ \widetilde{O}(q^{n_1} \cdot (b_{\rho_\lambda}(n_1) + n_1)) $, since it is clearly dominated by its step (2).

As with $ D(\rho_\lambda) $ and $ n(\rho_\lambda) $, both $ w(\rho_\lambda) $ and $ c(\rho_\lambda) $ are typically known, but $ \epsilon(\rho_\lambda) $ may be difficult to compute given its elusive non-constructive definition. The following is immediate from Theorem \ref{thm:weilconjectures}(4) and Proposition \ref{prop:computeepsilon}.

\begin{corollary}
\label{cor:computeepsilon}
Let $ \rho_\lambda $ be an almost everywhere unramified $ \lambda $-adic representation of a global function field $ K = \F_q(C) $ that is self-dual of weight $ w(\rho_\lambda) $ and sign $ c(\rho_\lambda) $. Then Algorithm \ref{alg:computeepsilon} outputs $ \epsilon(\rho_\lambda) \in F_\lambda^\times $ with inputs
$$ (\LL_\bullet(\rho_\lambda, T), \ D(\rho_\lambda, T), \ n(\rho_\lambda), \ 1 / q^{w(\rho_\lambda) + 1}, \ c(\rho_\lambda)). $$
\end{corollary}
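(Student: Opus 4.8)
The plan is to observe that Corollary~\ref{cor:computeepsilon} is essentially a direct translation of Proposition~\ref{prop:computeepsilon} into the language of $\lambda$-adic representations, so the work amounts to checking that the hypotheses of Proposition~\ref{prop:computeepsilon} are met and that the inputs in the corollary correspond to the inputs in the proposition. First I would set $F = F_\lambda$, take $V = V_K$ as the indexing set, and take the stratification $\LL_\bullet(T) = \LL_\bullet(\rho_\lambda, T)$; this is a stratification of $V_K$ by the example following the definition of stratification (the finiteness of each $V_i$ is exactly the statement that $\LL(\rho_\lambda,T)$ is a well-defined formal power series). By Theorem~\ref{thm:weilconjectures}(1), $\LL(\rho_\lambda,T) = N(\rho_\lambda,T)/D(\rho_\lambda,T)$ with $N(\rho_\lambda,T), D(\rho_\lambda,T) \in 1 + T\cdot F_\lambda[T]$ coprime of degrees $n(\rho_\lambda), d(\rho_\lambda)$, so the rationality hypothesis holds with $n = n(\rho_\lambda)$, $d = d(\rho_\lambda)$.

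Next I would verify the functional equation hypothesis. Since $\rho_\lambda$ is self-dual of weight $w(\rho_\lambda)$ and sign $c(\rho_\lambda) \in \{\id, \cc\}$, the computation displayed after the proof of Theorem~\ref{thm:weilconjectures} shows that
$$ \LL(\rho_\lambda, T) = \epsilon(\rho_\lambda) \cdot T^{n(\rho_\lambda) - d(\rho_\lambda)} \cdot \LL(\rho_\lambda, (q^{w(\rho_\lambda) + 1}T)^{-1})^{c(\rho_\lambda)}. $$
This is precisely the functional equation $\LL(T) = \epsilon \cdot T^{n - d} \cdot \LL(f/T)^c$ of Proposition~\ref{prop:computeepsilon} with $\epsilon = \epsilon(\rho_\lambda)$, $f = 1/q^{w(\rho_\lambda)+1}$, and $c = c(\rho_\lambda)$, the latter being a field endomorphism of $F_\lambda$ (the identity or complex conjugation) and computable. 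Note also $\epsilon(\rho_\lambda) \in F_\lambda^\times$ since it is a product of the nonzero local $\epsilon$-factors.

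With these identifications, Proposition~\ref{prop:computeepsilon} applied to $\LL_\bullet(\rho_\lambda,T)$, $D(\rho_\lambda,T)$, $n(\rho_\lambda)$, $f = 1/q^{w(\rho_\lambda)+1}$, $c = c(\rho_\lambda)$ states that Algorithm~\ref{alg:computeepsilon} with inputs $(\LL_\bullet(\rho_\lambda,T),\ D(\rho_\lambda,T),\ n(\rho_\lambda),\ 1/q^{w(\rho_\lambda)+1},\ c(\rho_\lambda))$ outputs $\epsilon(\rho_\lambda)$, which is the claim. There is no real obstacle here beyond bookkeeping; the only point requiring a moment's care is that the functional equation in Theorem~\ref{thm:weilconjectures}(4) is stated for $\rho_\lambda^\vee$ and must be rewritten using self-duality to bring it into the form required by Proposition~\ref{prop:computeepsilon}, but this rewriting is exactly the displayed identity already recorded in the text. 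Hence the corollary follows immediately.
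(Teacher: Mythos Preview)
Your proposal is correct and follows exactly the paper's approach: the paper simply states that the corollary is immediate from Theorem~\ref{thm:weilconjectures}(4) and Proposition~\ref{prop:computeepsilon}, and you have spelled out the bookkeeping behind that claim.
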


Let $ n = n(\rho_\lambda) $ and $ n_1 \coloneqq \lceil n / 2\rceil $. Steps (1) and (2) of Algorithm \ref{alg:computeepsilon} subsume that of Algorithm \ref{alg:functionalequation}, and have the dominant running time of $ \widetilde{O}(q^{n_1} \cdot (b_{\rho_\lambda}(n_1) + n)) $ as in the previous discussion, so this computation should not be repeated. In the best case scenario where $ M_m \ne 0 $, step (3) sets $ k = n - m $ and step (4) becomes vacuous, so its total running time is $ \widetilde{O}(q^{n_1} \cdot (b_{\rho_\lambda}(n_1) + n)) $. In the worst case scenario where $ M_1 = \dots = M_m = 0 $, step (3) sets $ k = n $ and step (4) has a running time of $ \widetilde{O}(q^n \cdot (b_{\rho_\lambda}(n) + n)) $, which is equivalent to that of Algorithm \ref{alg:rationalfunction}.

\begin{remark}
Heuristically, this worst case scenario occurs very rarely, while the best case scenario almost always occurs. This was also observed in a footnote of Comeau-Lapointe--David--Lal\'in--Li for $ L(E, \chi, s) $ \cite[Footnote 5]{CLDLL22}.
\end{remark}

\pagebreak

\section{Example computations}
\label{sec:examplecomputations}

Here are some examples of $ \lambda $-adic representations $ \rho_\lambda $ of $ K = \F_q(C) $ showcasing the main algorithms, which are mostly worked out by hand except for the enumeration of $ V_K $ via the \texttt{Places} function in Magma. For brevity, let $ V_{K, \le r} $ denote the finite subset of $ V_K $ consisting of places of degree at most $ r \in \N $.

\subsection{Trivial representations}

Let $ \rho_\lambda = \1 $ be the trivial representation of $ K $. Then $ \LL(\1, T) $ is just the Dedekind $ \zeta $-function $ \zeta_K(T) $, with $ \LL_v(\1, T) = 1 - T^{\deg v} $ for any place $ v \in V_K $, and the invariants are
$$ D(\1, T) = (1 - T)(1 - qT), \ n(\1) = 2g(C), \ \epsilon(\1) = q^{g(C) - 1}, \ w(\1) = 0, \ c(\1) = \id. $$

\begin{example}
Let $ K $ be the function field of the hyperelliptic curve $ C $ of genus $ g(C) = 3 $ over $ \F_3 $ given by $ u^2 = t^7 - t + 1 $, so $ n(\1) = 6 $. By Corollary \ref{cor:functionalequation}, it suffices to compute $ \LL_v(\1, T) $ for all places $ v \in V_{K, \le n_1} = V_{K, \le 3} $, tabulated as follows.
$$
\begin{array}{|c|c|}
\hline
v & \LL_v(\1, T) \\
\hline
(1 / t, u / t^4) & \multirow{7}{*}{$ 1 - T $} \\
\cline{1-1}
(t - 1, u + t) & \\
\cline{1-1}
(t - 1, u + t + 1) & \\
\cline{1-1}
(t, u - 1) & \\
\cline{1-1}
(t, u + 1) & \\
\cline{1-1}
(t + 1, u + t) & \\
\cline{1-1}
(t + 1, u + t - 1) & \\
\hline
(t^2 - t - 1, u) & \multirow{3}{*}{$ 1 - T^2 $} \\
\cline{1-1}
(t^2 + t - 1, u - t + 1) & \\
\cline{1-1}
(t^2 + t - 1, u + t - 1) & \\
\hline
\end{array}
\qquad
\begin{array}{|c|c|}
\hline
v & \LL_v(\1, T) \\
\hline
(t^3 - t^2 + 1, u - t^2) & \multirow{10}{*}{$ 1 - T^3 $} \\
\cline{1-1}
(t^3 - t^2 + 1, u + t^2) & \\
\cline{1-1}
(t^3 - t^2 + t + 1, u - t^2 - t - 1) & \\
\cline{1-1}
(t^3 - t^2 + t + 1, u + t^2 + t + 1) & \\
\cline{1-1}
(t^3 - t + 1, u - t^2 + t - 1) & \\
\cline{1-1}
(t^3 - t + 1, u + t^2 - t + 1) & \\
\cline{1-1}
(t^3 + t^2 - t + 1, u - t^2 - t + 1) & \\
\cline{1-1}
(t^3 + t^2 - t + 1, u + t^2 + t - 1) & \\
\cline{1-1}
(t^3 + t^2 + t - 1, u - t + 1) & \\
\cline{1-1}
(t^3 + t^2 + t - 1, u + t - 1) & \\
\hline
\end{array}
$$
Step (2) of Algorithm \ref{alg:rationalfunction} computes their product
$$ \prod_{v \in V_{K, \le 3}} [\LL_v(\1, T)]_3 = [(1 - T)^7(1 - T^2)^3(1 - T^3)^{10}]_3 = 1 - 7T + 18T^2 - 24T^3, $$
step (3) computes its inverse
$$ \prod_{v \in V_{K, \le 3}} [\LL_v(\1, T)^{-1}]_3 = 1 + 7T + 31T^2 + 115T^3, $$
and step (4) computes its product with $ [D(\1, T)]_3 $, which turns out to be
$$ P(\1, T) \coloneqq \left[[D(\1, T)]_3 \cdot \prod_{v \in V_{K, \le 3}} [\LL_v(\1, T)^{-1}]_3\right]_3 = 1 + 3T + 6T^2 + 12T^3. $$
Then $ [N(\1, T)]_3 = [P(\1, T)]_3 $, so it remains to compute the $ k $-th coefficients $ N_k $ of $ N(\1, T) $ for each $ k \in \{4, 5, 6\} $ via Algorithm \ref{alg:computecoefficients}, tabulated as follows.
$$
\begin{array}{|c|c|c|c|c|c|c|c|}
\hline
k & 0 & 1 & 2 & 3 & 4 & 5 & 6 \\
\hline
D_k & 1 & -4 & 3 & 0 & 0 & 0 & 0 \\
\hline
P_k & 1 & 3 & 6 & 12 & ? & ? & ? \\
\hline
M_{n - k} & - & - & - & - & 2 & 3 & 3 \\
\hline
\end{array}
$$
Since $ N_k = 9M_{n - k} $ for each $ k \in \{4, 5, 6\} $,
$$ N(\1, T) = 1 + 3T + 6T^2 + 12T^3 + 18T^4 + 27T^5 + 27T^6. $$
The complex roots of $ N(\1, T) $ have moduli $ 1 / \sqrt{3} $, so Theorem \ref{thm:weilconjectures}(5) is verified.
\end{example}

\pagebreak

\subsection{Elliptic curves}

Let $ \rho_\lambda = \rho_{E, \ell} $ be the $ \ell $-adic representation associated to an elliptic curve $ E $ over $ K $. Then
$$ \LL_v(\rho_{E, \ell}, T) =
\begin{cases}
1 - a_v(E)T^{\deg v} + q^{\deg v}T^{2\deg v} & \text{if} \ v \ \text{is good}, \\
1 - T^{\deg v} & \text{if} \ v \ \text{is split multiplicative}, \\
1 + T^{\deg v} & \text{if} \ v \ \text{is non-split multiplicative}, \\
1 & \text{if} \ v \ \text{is additive},
\end{cases}
$$
where $ a_v(E) \coloneqq 1 + p - \#E(\F_{q^{\deg v}}) $ is the trace of $ \rho_{E, \ell}(\phi_v) $. When $ \ch(K) > 3 $, there is an efficient lookup table to determine $ \epsilon(\rho_{E, \ell}) $ in terms of Jacobi symbols \cite[Theorem 3.1]{CCH05}. If $ v $ is potentially good, then
$$ \epsilon(\rho_{E, \ell}) = q^{n(\rho_{E, \ell}) - d(\rho_{E, \ell})} \cdot
\begin{cases}
1 & \text{if} \ \ord(\Delta_v(E)) \equiv 0 \mod 12, \\
(\tfrac{-3}{q})^{\deg v} & \text{if} \ \ord(\Delta_v(E)) \equiv 4, 8 \mod 12, \\
(\tfrac{-2}{q})^{\deg v} & \text{if} \ \ord(\Delta_v(E)) \equiv 3, 9 \mod 12, \\
(\tfrac{-1}{q})^{\deg v} & \text{otherwise},
\end{cases}
$$
where $ \Delta_v(E) $ is the minimal discriminant of $ E $ at $ v $. Otherwise
$$ \epsilon(\rho_{E, \ell}) = q^{n(\rho_{E, \ell}) - d(\rho_{E, \ell})} \cdot
\begin{cases}
-1 & \text{if} \ v \ \text{is split multiplicative}, \\
1 & \text{if} \ v \ \text{is non-split multiplicative}, \\
(\tfrac{-1}{q})^{\deg v} & \text{if} \ v \ \text{is additive}.
\end{cases}
$$

\begin{remark}
When $ \ch(K) \in \{2, 3\} $, it may be possible to generalise the root number results by Kobayashi \cite{Kob02} and Dokchitser--Dokchitser \cite{DD08} to $ K $.
\end{remark}

If $ E $ is \emph{constant}, in the sense that it arises as the base change to $ K $ of an elliptic curve $ E_0 $ over $ \F_q $, then the remaining are
$$  D(\rho_{E, \ell}, T) = N(E_0, T) \cdot N(E_0, qT), \ n(\rho_{E, \ell}) = 4g(C), \ w(\rho_{E, \ell}) = 1, \ c(\rho_{E, \ell}) = \id, $$
where $ N(E_0, T) $ is the numerator of the $ \zeta $-function of $ E_0 $. Otherwise,
$$ D(\rho_{E, \ell}, T) = 1, \ n(\rho_{E, \ell}) = \deg\ff(\rho_{E, \ell}) + 4g(C) - 4, \ w(\rho_{E, \ell}) = 1, \ c(\rho_{E, \ell}) = \id, $$
where $ \ff(\rho_{E, \ell}) $ can be determined from Tate's algorithm.

\begin{remark}
When $ E $ is non-constant, its associated elliptic surface $ \EE $ over $ \F_q $ has at least one singular fibre. Then $ \LL(\rho_{E, \ell}, T) = N(\rho_{E, \ell}, T) $ is the characteristic polynomial of $ \varphi_q $ acting on the orthogonal complement of the trivial sublattice of the N\'eron--Severi group $ \NS(\EE) $ under its cycle class map to the second $ \ell $-adic cohomology group $ H_\et^2(\overline{\EE}, \Q_\ell(1)) $ \cite[Theorem 4]{Shi92}. This is the idea behind Magma's implementation of \texttt{LFunction} for non-constant elliptic curves over $ \F_q(t) $.
\end{remark}

For simplicity, the following example considers $ K = \F_q(t) $, but the same algorithm will work for general $ K $ assuming a uniform algorithm to compute $ a_v(E) $.

\begin{example}
Let $ E $ be the non-constant elliptic curve over $ K = \F_7(t) $ given by $ y^2 + txy = x^3 + (t^2 + 2) $. Tate's algorithm gives
$$ \ff(\rho_{E, \ell}) = (1 / t) \cdot (t + 3) \cdot (t + 4) \cdot (t^2 + 2) \cdot (t^2 + 2t + 3) \cdot (t^2 + 5t + 3), $$
with $ (t + 3) $ and $ (t + 4) $ being non-split multiplicative and the rest being split multiplicative, so $ n(\rho_{E, \ell}) = 5 $ and $ \epsilon(\rho_{E, \ell}) = 16807 $. By Corollary \ref{cor:functionalequation}, it suffices to compute $ \LL_v(\rho_{E, \ell}, T) $ for all places $ v \in V_{K, \le n_1} = V_{K, \le 2} $, tabulated as follows.

\pagebreak

$$
\begin{array}[t]{|c|c|c|}
\hline
v & a_v(E) & \LL_v(\rho_{E, \ell}, T) \\
\hline
(1 / t) & 1 & 1 - T \\
\hline
(t) & -1 & 1 + T + 7T^2 \\
\hline
(t + 1) & 3 & 1 - 3T + 7T^2 \\
\hline
(t + 2) & -2 & 1 + 2T + 7T^2 \\
\hline
(t + 3) & -1 & 1 + T \\
\hline
(t + 4) & -1 & 1 + T \\
\hline
(t + 5) & -2 & 1 + 2T + 7T^2 \\
\hline
(t + 6) & 3 & 1 - 3T + 7T^2 \\
\hline
(t^2 + 1) & -10 & 1 + 10T^2 + 7T^4 \\
\hline
(t^2 + 2) & 1 & 1 - T^2 \\
\hline
(t^2 + 4) & -14 & 1 + 14T^2 + 7T^4 \\
\hline
(t^2 + t + 3) & 4 & 1 - 4T^2 + 7T^4 \\
\hline
(t^2 + t + 4) & 3 & 1 - 3T^2 + 7T^4 \\
\hline
(t^2 + t + 6) & 5 & 1 - 5T^2 + 7T^4 \\
\hline
\end{array}
\quad
\begin{array}[t]{|c|c|c|}
\hline
v & a_v(E) & \LL_v(\rho_{E, \ell}, T) \\
\hline
(t^2 + 2t + 2) & 12 & 1 - 12T^2 + 7T^4 \\
\hline
(t^2 + 2t + 3) & 1 & 1 - T^2 \\
\hline
(t^2 + 2t + 5) & 4 & 1 - 4T^2 + 7T^4 \\
\hline
(t^2 + 3t + 1) & 12 & 1 - 12T^2 + 7T^4 \\
\hline
(t^2 + 3t + 5) & 2 & 1 - 2T^2 + 7T^4 \\
\hline
(t^2 + 3t + 6) & 3 & 1 - 3T^2 + 7T^4 \\
\hline
(t^2 + 4t + 1) & 12 & 1 - 12T^2 + 7T^4 \\
\hline
(t^2 + 4t + 5) & 2 & 1 - 2T^2 + 7T^4 \\
\hline
(t^2 + 4t + 6) & 3 & 1 - 3T^2 + 7T^4 \\
\hline
(t^2 + 5t + 2) & 12 & 1 - 12T^2 + 7T^4 \\
\hline
(t^2 + 5t + 3) & 1 & 1 - T^2 \\
\hline
(t^2 + 5t + 5) & 4 & 1 - 4T^2 + 7T^4 \\
\hline
(t^2 + 6t + 3) & 4 & 1 - 4T^2 + 7T^4 \\
\hline
(t^2 + 6t + 4) & 3 & 1 - 3T^2 + 7T^4 \\
\hline
(t^2 + 6t + 6) & 5 & 1 - 5T^2 + 7T^4 \\
\hline
\end{array}
$$
Steps (2) to (4) of Algorithm \ref{alg:rationalfunction} compute
$$ P(\rho_{E, \ell}, T) \coloneqq \left[[D(\rho_{E, \ell}, T)]_2 \cdot \prod_{v \in V_{K, \le 2}} [\LL_v(\rho_{E, \ell}, T)^{-1}]_2\right]_2 = 1 + 49T^2, $$
Then $ [N(\rho_{E, \ell}, T)]_2 = [P(\rho_{E, \ell}, T)]_2 $, so it remains to compute the $ k $-th coefficients $ N_k $ of $ N(\rho_{E, \ell}, T) $ for each $ k \in \{3, 4, 5\} $ via Algorithm \ref{alg:computecoefficients}, tabulated as follows.
$$
\begin{array}{|c|c|c|c|c|c|c|}
\hline
k & 0 & 1 & 2 & 3 & 4 & 5 \\
\hline
D_k & 1 & 0 & 0 & 0 & 0 & 0 \\
\hline
P_k & 1 & 0 & 49 & ? & ? & ? \\
\hline
M_{n - k} & - & - & - & 1 / 49 & 0 & 1 \\
\hline
\end{array}
$$
Since $ N_k = 16807M_{n - k} $ for each $ k \in \{3, 4, 5\} $,
$$ N(\rho_{E, \ell}, T) = 1 + 49T^2 + 343T^3 + 16807T^5. $$
The complex roots of $ N(\rho_{E, \ell}, T) $ have moduli $ 1 / 7 $, so Theorem \ref{thm:weilconjectures}(5) is verified.
\end{example}

\subsection{Dirichlet characters}

Let $ \rho_\lambda = \chi $ be a primitive Dirichlet character of $ K = \F_q(t) $ of some monic non-constant square-free modulus $ P(t) \in \F_q[t] $. This is a homomorphism $ \chi : (\F_q[t] / P(t))^\times \to \C^\times $ that factors through a separable geometric extension of $ K $, so it makes sense to evaluate $ \chi $ at a finite place $ v \in V_K $ by considering any generator of the ideal of $ v $. By viewing $ \chi $ as a character of $ G_K $, it also makes sense to evaluate $ \chi $ at the unique degree one infinite place $ 1 / t \in V_K $ and compute $ \ff(\chi) = P(t) \cdot (1 / t)^{\alpha(\chi_{1 / t})} $, with \cite[Lemma 2.1]{CLDLL22}
$$ \chi(1 / t) =
\begin{cases}
1 & \text{if} \ \chi|_{\F_q^\times} = 1, \\
0 & \text{otherwise},
\end{cases}
\qquad \alpha(\chi_{1 / t}) =
\begin{cases}
0 & \text{if} \ \chi|_{\F_q^\times} = 1, \\
1 & \text{otherwise}.
\end{cases}
$$

\begin{remark}
These statements can be generalised to primitive Dirichlet characters of $ \F_q(t) $ of arbitrary modulus by studying the ramification behaviour of $ 1 / t $ over Carlitz extensions, which will be explored in a future work.
\end{remark}

\pagebreak

Then $ \LL_v(\chi, T) = 1 - \chi(v)T^{\deg v} $ for any place $ v \in V_K $, and the invariants are
$$ D(\chi, T) = 1, \ n(\chi) = \deg\ff(\chi) - 2, \ w(\chi) = 0, \ c(\chi) = \cc. $$
Now $ \epsilon(\chi) $ can be computed via Gauss sums \cite[Corollary 2.4]{DFL22} or the canonical divisor class \cite[Theorem 6.4.6]{VS06}, but this is not necessary given Corollary \ref{cor:computeepsilon}.

\begin{example}
Let $ \chi $ be the primitive Dirichlet character of $ K = \F_3(t) $ of modulus $ t^6 + t^4 + t^2 + 1 = (t^2 - t - 1)(t^2 + 1)(t^2 + t - 1) $ given by the product of
$$ \chi_{t^2 - t - 1} : t \mapsto \zeta_8, \qquad \chi_{t^2 + 1} : t + 1 \mapsto \imath, \qquad \chi_{t^2 + t - 1} : t \mapsto -1. $$
Then $ \chi(1) = \chi(2) = 1 $, so $ n(\chi) = 5 $. By Corollary \ref{cor:computeepsilon}, it suffices to compute $ \LL_v(\chi, T) $ for all places $ v \in V_{K, \le n_1} = V_{K, \le 3} $, tabulated as follows.
$$
\begin{array}[t]{|c|c|c|c|c|}
\hline
v & \chi_{t^2 - t - 1}(v) & \chi_{t^2 + 1}(v) & \chi_{t^2 + t - 1}(v) & \LL_v(\chi, T) \\
\hline
(1 / t) & 0 & 1 & 1 & 1 \\
\hline
(t - 1) & -\zeta_8^3 & -\imath & 1 & 1 + \zeta_8T \\
\hline
(t) & \zeta_8 & -1 & -1 & 1 - \zeta_8T \\
\hline
(t + 1) & \imath & \imath & -1 & 1 - T \\
\hline
(t^2 - t - 1) & 0 & -\imath & -1 & 1 \\
\hline
(t^2 + 1) & -\zeta_8^3 & 0 & -1 & 1 \\
\hline
(t^2 + t - 1) & -\zeta_8 & \imath & 0 & 1 \\
\hline
(t^3 - t^2 - t - 1) & -1 & -1 & -1 & 1 + T^3 \\
\hline
(t^3 - t^2 + 1) & \imath & \imath & 1 & 1 + T^3 \\
\hline
(t^3 - t^2 + t + 1) & \zeta_8^3 & 1 & 1 & 1 - \zeta_8^3T^3 \\
\hline
(t^3 - t - 1) & \zeta_8 & -\imath & -1 & 1 - \zeta_8^3T^3 \\
\hline
(t^3 - t + 1) & -\zeta_8^3 & \imath & -1 & 1 + \zeta_8T^3 \\
\hline
(t^3 + t^2 - t + 1) & -\zeta_8 & -1 & 1 & 1 - \zeta_8T^3 \\
\hline
(t^3 + t^2 - 1) & 1 & -\imath & 1 & 1 + \imath T^3 \\
\hline
(t^3 + t^2 + t - 1) & \imath & 1 & -1 & 1 + \imath T^3 \\
\hline
\end{array}
$$
Steps (2) to (4) of Algorithm \ref{alg:rationalfunction} compute
$$ P(\chi, T) \coloneqq \left[[D(\chi, T)]_3 \cdot \prod_{v \in V_{K, \le 3}} [\LL_v(\chi, T)^{-1}]_3\right]_3 = 1 + T + (\imath + 1)T^2 + (2\zeta_8^3 - \imath - 1)T^3, $$
Then $ [N(\chi, T)]_3 = [P(\chi, T)]_3 $, so it remains to compute the $ k $-th coefficients $ N_k $ of $ N(\chi, T) $ for each $ k \in \{4, 5\} $ via Algorithm \ref{alg:computecoefficients}, tabulated as follows.
$$
\begin{array}{|c|c|c|c|c|c|c|}
\hline
k & 0 & 1 & 2 & 3 & 4 & 5 \\
\hline
D_k & 1 & 0 & 0 & 0 & 0 & 0 \\
\hline
P_k & 1 & 1 & \imath + 1 & 2\zeta_8^3 - \imath - 1 & ? & ? \\
\hline
M_{n - k} & - & - & - & (\imath + 1)^{\cc} / 9 & 1 / 3 & 1 \\
\hline
\end{array}
$$
Now $ \epsilon(\chi) $ is not known, so this also computes $ M_2 = (\imath + 1)^{\cc} / 9 \ne 0 $, and hence
$$ \epsilon(\chi) = \dfrac{2\zeta_8^3 - \imath - 1}{(\imath + 1)^{\cc} / 9} = 9\zeta_8^3 - 9\imath - 9\zeta_8. $$
Since $ N_k = (9\zeta_8^3 - 9\imath - 9\zeta_8)M_{n - k} $ for each $ k \in \{3, 4, 5\} $,
$$ N(\chi, T) = 1 + T + (\imath + 1)T^2 + (2\zeta_8^3 - \imath - 1)T^3 - (3\zeta_8^3 - 3\imath - 3\zeta_8)T^4 + (9\zeta_8^3 - 9\imath - 9\zeta_8)T^5. $$
The complex roots of $ N(\chi, T) $ have moduli $ 1 / 3 $, so Theorem \ref{thm:weilconjectures}(5) is verified.
\end{example}

\pagebreak

\subsection{Other examples}

The range of examples of $ \rho_\lambda $ illustrated here is only limited by the computations of $ \ff(\rho_\lambda) $ and $ \LL_v(\rho_\lambda, T) $ that are easily done by hand or currently implemented in Magma. With a bit more work, many other examples of $ \LL(\rho_\lambda, T) $ and $ \epsilon(\rho_\lambda) $ can be computed from well-known results in the existing literature.

An Artin representation of $ K $ can be given as an explicit matrix representation $ \varrho : \Gal(L / K) \to \GL(V) $ for some finite extension $ L $ of $ K $, in which case $ \ff(\varrho) $ is immediate. Computing $ \LL_v(\varrho, T) $ then reduces to identifying $ \phi_v $ as elements of $ \Gal(L / K) $, which are effectively decidable thanks to the work of Dokchitser--Dokchitser \cite{DD13}. Furthermore $ w(\varrho) = 0 $, and if $ \varrho $ is irreducible and ramified somewhere, then $ D(\varrho, T) = 1 $ and $ n(\varrho) = \deg\ff(\varrho) + 4g(C) - 4 $. In many cases $ c(\varrho) \in \{\id, \cc\} $, but this is not a necessary restriction, and the case when $ c(\varrho) : F \to F $ is an arbitrary field endomorphism will be explored in a future work.

The Jacobian $ J $ of a curve $ X $ over $ K $ has $ w(\rho_{J, \ell}) = 1 $ and $ c(\rho_{J, \ell}) = \id $, and $ n(\rho_{J, \ell}) - d(\rho_{J, \ell}) = \deg\ff(\rho_{J, \ell}) + 2g(X)(2g(C) - 2) $. If $ X $ is given by an explicit hyperelliptic equation with $ \ch(K) \ne 2 $, then $ \ff(\rho_{J, \ell}) $ can be determined completely from the machinery of cluster pictures of Dokchitser--Dokchitser--Maistret--Morgan \cite{DDMM23}, with examples of families over number fields worked out explicitly \cite{ACIKMM24}. Computing $ \LL_v(\rho_{J, \ell}, T) $ could involve decomposing $ \rho_{J, \ell} $ into simpler components whose local Euler factors are computable, similar to the work of Maistret--Sutherland for many genus two curves over $ \Q $ \cite{MS25}. Note that when $ X $ is superelliptic, there are analogous decompositions of $ \rho_{J, \ell} $ in terms of cluster pictures \cite{PV23}.

By Lafforgue's theorem, the L-function $ L(\rho, s) $ of a cuspidal automorphic representation $ \rho $ of $ K $ corresponding to some $ \lambda $-adic representation $ \rho_\lambda $ of $ K $ would naturally inherit the rationality and functional equation of $ L(\rho_\lambda, s) $, so its computation can be done similarly. This may include L-functions of automorphic origin, such as those of Drinfeld modular forms arising from non-constant elliptic curves \cite{JL70}, which are certain rigid-analytic functions on the Drinfeld upper half plane constructed from Drinfeld modules \cite{Gos80}. It may be interesting to see if the positive characteristic Goss L-functions of Drinfeld modules, and more generally of $ t $-motives \cite{Gos92}, can fit in the abstract framework of the previous section.

Finally, let $ \rho_\lambda $ and $ \sigma_\lambda $ be two $ \lambda $-adic representations of $ K $. In many cases, including when $ (\ff(\rho_\lambda), \ff(\sigma_\lambda)) = 1 $, the invariants of $ \rho_\lambda \otimes \sigma_\lambda $ can be expressed in terms of those of $ \rho_\lambda $ and $ \sigma_\lambda $. In particular, having an explicit expression of $ \epsilon(\rho_\lambda \otimes \sigma_\lambda) $ in terms of $ \epsilon(\rho_\lambda) $ and $ \epsilon(\sigma_\lambda) $ via a global version of Proposition \ref{prop:unramifiedepsilon} caps the worst running time in the computation of $ \LL(\rho_\lambda \otimes \sigma_\lambda, T) $ in terms of those for $ \LL(\rho_\lambda, T) $ and for $ \LL(\sigma_\lambda, T) $. This will be explored in a future work.

\section*{Acknowledgments}

Much of this paper is part of my PhD thesis, and I thank Vladimir Dokchitser for his guidance and support throughout. I also thank Richard Griffon, Timo Keller, Douglas Ulmer, and Hanneke Wiersema for their comments, as well as Tim Dokchitser and C\'eline Maistret for their encouragements. During the research into this paper, I was supported by the Engineering and Physical Sciences Research Council [EP/S021590/1], EPSRC Centre for Doctoral Training in Geometry and Number Theory (London School of Geometry and Number Theory), University College London. During the writing of this paper, I was part of the Scalable Theorem Proving via Mathematical Databases project funded by the AI for Math Fund managed by Renaissance Philanthropy in partnership with founding donor XTX Markets.

\pagebreak

\bibliographystyle{abbrv}
\bibliography{main}

@misc{ACIKMM24,
  author = {Martin Azon and Mar Curc\'o-Iranzo and Maleeha Khawaja and C\'eline Maistret and Diana Mocanu},
  title = {{Conductor exponents for families of hyperelliptic curves}},
  year = {2024},
  eprint = {2410.21134},
  archivePrefix = {arXiv},
  primaryClass = {math.NT},
  note = {\url{https://arxiv.org/abs/2410.21134}},
}

@misc{Ang25,
  author = {David Kurniadi Angdinata},
  title = {{Motives over global function fields in Magma}},
  year = {2025},
  note = {\url{https://github.com/Multramate/FldFunFin}},
}

@article{BCP97,
  author = {Wieb Bosma and John Cannon and Catherine Playoust},
  title = {{The Magma algebra system. I. The user language}},
  note = {Computational algebra and number theory (London, 1993)},
  journal = {J. Symbolic Comput.},
  volume = {24},
  year = {1997},
  number = {3-4},
  pages = {235--265},
  issn = {0747-7171},
  doi = {10.1006/jsco.1996.0125},
  url = {http://dx.doi.org/10.1006/jsco.1996.0125},
}

@article{Boo06,
  author = {Andrew Booker},
  title = {{Artin's conjecture, Turing's method, and the Riemann hypothesis}},
  journal = {Experiment. Math.},
  volume = {15},
  year = {2006},
  number = {4},
  pages = {385--407},
  issn = {1058-6458,1944-950X},
}

@article{BSSVY16,
  author = {Andrew Booker and Jeroen Sijsling and Andrew Sutherland and John Voight and Dan Yasaki},
  title = {{A database of genus-2 curves over the rational numbers}},
  journal = {LMS J. Comput. Math.},
  volume = {19},
  year = {2016},
  number = {suppl. A},
  pages = {235--254},
  issn = {1461-1570},
  doi = {10.1112/S146115701600019X},
  url = {https://doi.org/10.1112/S146115701600019X},
}

@article{CCH05,
  author = {Brian Conrad and Keith Conrad and Harald Helfgott},
  title = {{Root numbers and ranks in positive characteristic}},
  journal = {Adv. Math.},
  volume = {198},
  year = {2005},
  number = {2},
  pages = {684--731},
  issn = {0001-8708},
  doi = {10.1016/j.aim.2005.06.013},
  url = {https://doi.org/10.1016/j.aim.2005.06.013},
}

@article{CLDLL22,
  author = {Antoine Comeau-Lapointe and Chantal David and Matilde Lalin and Wanlin Li},
  title = {{On the vanishing of twisted $ L $-functions of elliptic curves over rational function fields}},
  journal = {Res. Number Theory},
  volume = {8},
  year = {2022},
  number = {4},
  pages = {Paper No. 76, 28},
  issn = {2522-0160,2363-9555},
  doi = {10.1007/s40993-022-00379-w},
  url = {https://doi.org/10.1007/s40993-022-00379-w},
}

@article{Con06,
  author = {Brian Conrad},
  title = {{Chow's $ K / k $-image and $ K / k $-trace, and the Lang-N\'eron theorem}},
  journal = {Enseign. Math. (2)},
  volume = {52},
  year = {2006},
  number = {1-2},
  pages = {37--108},
  issn = {0013-8584},
}

@article{DD08,
  author = {Tim Dokchitser and Vladimir Dokchitser},
  title = {{Root numbers of elliptic curves in residue characteristic 2}},
  journal = {Bull. Lond. Math. Soc.},
  volume = {40},
  year = {2008},
  number = {3},
  pages = {516--524},
  issn = {0024-6093,1469-2120},
  doi = {10.1112/blms/bdn034},
  url = {https://doi.org/10.1112/blms/bdn034},
}

@article{DD13,
  author = {Tim Dokchitser and Vladimir Dokchitser},
  title = {{Identifying Frobenius elements in Galois groups}},
  journal = {Algebra Number Theory},
  volume = {7},
  year = {2013},
  number = {6},
  pages = {1325--1352},
  issn = {1937-0652,1944-7833},
  doi = {10.2140/ant.2013.7.1325},
  url = {https://doi.org/10.2140/ant.2013.7.1325},
}

@article{DDMM23,
  author = {Tim Dokchitser and Vladimir Dokchitser and C\'eline Maistret and Adam Morgan},
  title = {{Arithmetic of hyperelliptic curves over local fields}},
  journal = {Math. Ann.},
  volume = {385},
  year = {2023},
  number = {3-4},
  pages = {1213--1322},
  issn = {0025-5831,1432-1807},
  doi = {10.1007/s00208-021-02319-y},
  url = {https://doi.org/10.1007/s00208-021-02319-y},
}

@inproceedings{Del73a,
  author = {Pierre Deligne},
  title = {{La formule de dualite globale}},
  booktitle = {{Th\'eorie des Topos et Cohomologie Etale des Sch\'emas}},
  year = {1973},
  publisher = {Springer Berlin Heidelberg},
  pages = {481--587},
  isbn = {978-3-540-38124-2},
}

@incollection{Del73b,
  author = {Pierre Deligne},
  title = {{Les constantes des \'equations fonctionnelles des fonctions $ L $}},
  booktitle = {{Modular functions of one variable, II (Proc. Internat. Summer School, Univ. Antwerp, Antwerp, 1972)}},
  series = {Lecture Notes in Math.},
  volume = {89},
  pages = {501--597},
  publisher = {Springer, Berlin-New York},
  year = {1973},
}

@article{Del80,
  author = {Pierre Deligne},
  title = {{La conjecture de Weil. II}},
  journal = {Inst. Hautes \'Etudes Sci. Publ. Math.},
  volume = {52},
  year = {1980},
  pages = {137--252},
  issn = {0073-8301},
  url = {http://www.numdam.org/item?id=PMIHES_1980__52__137_0},
}

@article{DEW21,
  author = {Vladimir Dokchitser and Robert Evans and Hanneke Wiersema},
  title = {{On a BSD-type formula for $ L $-values of Artin twists of elliptic curves}},
  journal = {J. Reine Angew. Math.},
  volume = {773},
  year = {2021},
  pages = {199--230},
  issn = {0075-4102,1435-5345},
  doi = {10.1515/crelle-2020-0036},
  url = {https://doi.org/10.1515/crelle-2020-0036},
}

@article{DFL22,
  author = {Chantal David and Alexandra Florea and Matilde Lal\'in},
  title = {{The mean values of cubic $ L $-functions over function fields}},
  journal = {Algebra Number Theory},
  volume = {16},
  year = {2022},
  number = {5},
  pages = {1259--1326},
  issn = {1937-0652},
  doi = {10.2140/ant.2022.16.1259},
  url = {https://doi.org/10.2140/ant.2022.16.1259},
}

@article{Dok04,
  author = {Tim Dokchitser},
  title = {{Computing special values of motivic $ L $-functions}},
  journal = {Experiment. Math.},
  volume = {13},
  year = {2004},
  number = {2},
  pages = {137--149},
  issn = {1058-6458},
  url = {http://projecteuclid.org/euclid.em/1090350929},
}

@article{Gos80,
  author = {David Goss},
  title = {{$ \pi $-adic Eisenstein series for function fields}},
  journal = {Compositio Math.},
  volume = {41},
  year = {1980},
  number = {1},
  pages = {3--38},
  issn = {0010-437X,1570-5846},
  url = {http://www.numdam.org/item?id=CM_1980__41_1_3_0},
}

@incollection{Gos92,
  author = {David Goss},
  title = {{$ L $-series of $ t $-motives and Drinfel'd modules}},
  booktitle = {{The arithmetic of function fields (Columbus, OH, 1991)}},
  series = {Ohio State Univ. Math. Res. Inst. Publ.},
  volume = {2},
  pages = {313--402},
  publisher = {de Gruyter, Berlin},
  year = {1992},
  isbn = {3-11-013171-4},
}

@inproceedings{GR72,
  author = {Alexander Grothendieck and Michele Raynaud},
  title = {{Modeles de neron et monodromie}},
  booktitle = {{Groupes de Monodromie en G\'eom\'etrie Alg\'ebrique}},
  year = {1972},
  publisher = {Springer Berlin Heidelberg},
  pages = {313--523},
  isbn = {978-3-540-37984-3},
}

@incollection{Gro68,
  author = {Alexander Grothendieck},
  title = {{Formule de Lefschetz et rationalit\'e des fonctions $ L $}},
  booktitle = {{Dix expos\'es sur la cohomologie des sch\'emas}},
  series = {Adv. Stud. Pure Math.},
  volume = {3},
  pages = {31--45},
  publisher = {North-Holland, Amsterdam},
  year = {1968},
}

@inproceedings{Gro77,
  author = {Alexander Grothendieck},
  editor = {Luc Illusie},
  title = {{Formule d'Euler-Poincare en cohomologie etale}},
  booktitle = {{S\'eminaire de G\'eometrie Alg\'ebrique du Bois-Marie 1965--1966 (SGA 5)}},
  year = {1977},
  publisher = {Springer Berlin Heidelberg},
  pages = {372--406},
  isbn = {978-3-540-37359-9},
}

@book{JL70,
  author = {Herv\'e Jacquet and Robert Langlands},
  title = {{Automorphic forms on $ \GL(2) $}},
  series = {Lecture Notes in Mathematics, Vol. 114},
  publisher = {Springer-Verlag, Berlin-New York},
  year = {1970},
  pages = {vii+548},
}

@inproceedings{Jou77,
  author = {Jean-Pierre Jouanolou},
  editor = {Luc Illusie},
  title = {{Cohomologie L-adique}},
  booktitle = {{S\'eminaire de G\'eometrie Alg\'ebrique du Bois-Marie 1965--1966 (SGA 5)}},
  year = {1977},
  publisher = {Springer Berlin Heidelberg},
  pages = {251--281},
  isbn = {978-3-540-37359-9},
}

@incollection{Kle68,
  author = {Steven Kleiman},
  title = {{Algebraic cycles and the Weil conjectures}},
  booktitle = {{Dix expos\'es sur la cohomologie des sch\'emas}},
  series = {Adv. Stud. Pure Math.},
  volume = {3},
  pages = {359--386},
  publisher = {North-Holland, Amsterdam},
  year = {1968},
}

@article{Kob02,
  author = {Shinichi Kobayashi},
  title = {{The local root number of elliptic curves with wild ramification}},
  journal = {Math. Ann.},
  volume = {323},
  year = {2002},
  number = {3},
  pages = {609--623},
  issn = {0025-5831,1432-1807},
  doi = {10.1007/s002080200318},
  url = {https://doi.org/10.1007/s002080200318},
}

@article{Laf02,
  author = {Laurent Lafforgue},
  title = {{Chtoucas de Drinfeld et correspondance de Langlands}},
  journal = {Invent. Math.},
  volume = {147},
  year = {2002},
  number = {1},
  pages = {1--241},
  issn = {0020-9910},
  doi = {10.1007/s002220100174},
  url = {https://doi.org/10.1007/s002220100174},
}

@misc{Lan70,
  author = {Robert Langlands},
  title = {{On the Functional Equation of the Artin L-functions}},
  year = {1970},
  url = {https://publications.ias.edu/sites/default/files/a-ps.pdf},
}

@manual{LMFDB,
  organization = {The LMFDB Collaboration},
  title = {{The $ L $-functions and modular forms database}},
  note = {\url{https://www.lmfdb.org}},
}

@article{MS25,
  author = {C\'eline Maistret and Andrew Sutherland},
  title = {{Computing Euler factors of genus 2 curves at odd primes of almost good reduction}},
  journal = {Res. Number Theory},
  volume = {11},
  year = {2025},
  number = {1},
  pages = {Paper No. 37, 21},
  issn = {2522-0160,2363-9555},
  doi = {10.1007/s40993-024-00605-7},
  url = {https://doi.org/10.1007/s40993-024-00605-7},
}

@manual{PARI,
  organization = {The PARI Group},
  title = {{PARI/GP version 2.15.4}},
  address = {Univ. Bordeaux},
  note = {\url{http://pari.math.u-bordeaux.fr/}},
}

@phdthesis{Pla11,
  author = {David Platt},
  title = {{Computing degree 1 L-functions rigorously}},
  school = {University of Bristol},
  year = {2011},
}

@article{PV23,
  author = {Ariel Pacetti and Angel Villanueva},
  title = {{Galois representations of superelliptic curves}},
  journal = {Glasgow Math. J.},
  volume = {65},
  year = {2023},
  number = {2},
  pages = {356--382},
  doi = {10.1017/S0017089522000386},
}

@incollection{Roh94,
  author = {David Rohrlich},
  title = {{Elliptic curves and the Weil-Deligne group}},
  booktitle = {{Elliptic curves and related topics}},
  series = {CRM Proc. Lecture Notes},
  volume = {4},
  pages = {125--157},
  publisher = {Amer. Math. Soc., Providence, RI},
  year = {1994},
  doi = {10.1090/crmp/004/10},
  url = {https://doi.org/10.1090/crmp/004/10},
}

@incollection{Roh13,
  author = {David Rohrlich},
  title = {{Self-dual Artin representations}},
  booktitle = {{Automorphic representations and $ L $-functions}},
  series = {Tata Inst. Fundam. Res. Stud. Math.},
  volume = {22},
  pages = {455--499},
  publisher = {Tata Inst. Fund. Res., Mumbai},
  year = {2013},
  isbn = {978-93-80250-49-6},
}

@book{Ros02,
  author = {Michael Rosen},
  title = {{Number theory in function fields}},
  series = {Graduate Texts in Mathematics},
  volume = {210},
  publisher = {Springer-Verlag, New York},
  year = {2002},
  pages = {xii+358},
  isbn = {0-387-95335-3},
  doi = {10.1007/978-1-4757-6046-0},
  url = {https://doi.org/10.1007/978-1-4757-6046-0},
}

@incollection{Rub05,
  author = {Michael Rubinstein},
  title = {{Computational methods and experiments in analytic number theory}},
  booktitle = {{Recent perspectives in random matrix theory and number theory}},
  series = {London Math. Soc. Lecture Note Ser.},
  volume = {322},
  pages = {425--506},
  publisher = {Cambridge Univ. Press, Cambridge},
  year = {2005},
  isbn = {978-0-521-62058-1; 0-521-62058-9},
  doi = {10.1017/CBO9780511550492.015},
  url = {https://doi.org/10.1017/CBO9780511550492.015},
}

@manual{SageMath,
  author = {William Stein and others},
  organization = {The Sage Development Team},
  title = {{Sage Mathematics Software (Version 10.7)}},
  note = {\url{http://www.sagemath.org}},
}

@article{Sch85,
  author = {Ren\'e Schoof},
  title = {{Elliptic curves over finite fields and the computation of square roots mod $ p $}},
  journal = {Math. Comp.},
  volume = {44},
  year = {1985},
  number = {170},
  pages = {483--494},
  issn = {0025-5718,1088-6842},
  doi = {10.2307/2007968},
  url = {https://doi.org/10.2307/2007968},
}

@article{Shi92,
  author = {Tetsuji Shioda},
  title = {{Some remarks on elliptic curves over function fields}},
  journal = {Ast\'erisque},
  volume = {209},
  year = {1992},
  pages = {12, 99--114},
  issn = {0303-1179,2492-5926},
}

@article{SS71,
  author = {Arnold Sch\"onhage and Volker Strassen},
  title = {{Schnelle Multiplikation grosser Zahlen}},
  journal = {Computing (Arch. Elektron. Rechnen)},
  volume = {7},
  year = {1971},
  pages = {281--292},
  issn = {0010-485X,1436-5057},
  doi = {10.1007/bf02242355},
  url = {https://doi.org/10.1007/bf02242355},
}

@article{ST68,
  author = {Jean-Pierre Serre and John Tate},
  title = {{Good reduction of abelian varieties}},
  journal = {Ann. of Math. (2)},
  volume = {88},
  year = {1968},
  pages = {492--517},
  issn = {0003-486X},
  doi = {10.2307/1970722},
  url = {https://doi.org/10.2307/1970722},
}

@inproceedings{Tat79,
  author = {John Tate},
  title = {{Number theoretic background}},
  booktitle = {{Automorphic forms, representations and $ L $-functions (Proc. Sympos. Pure Math., Oregon State Univ., Corvallis, Ore., 1977), Part 2}},
  series = {Proc. Sympos. Pure Math., XXXIII},
  pages = {3--26},
  publisher = {Amer. Math. Soc., Providence, RI},
  year = {1979},
}

@article{Ulm05,
  author = {Douglas Ulmer},
  title = {{Geometric non-vanishing}},
  journal = {Invent. Math.},
  volume = {159},
  year = {2005},
  number = {1},
  pages = {133--186},
  issn = {0020-9910,1432-1297},
  doi = {10.1007/s00222-004-0386-z},
  url = {https://doi.org/10.1007/s00222-004-0386-z},
}

@incollection{Ulm11,
  author = {Douglas Ulmer},
  title = {{Elliptic curves over function fields}},
  booktitle = {{Arithmetic of $ L $-functions}},
  series = {IAS/Park City Math. Ser.},
  volume = {18},
  pages = {211--280},
  publisher = {Amer. Math. Soc., Providence, RI},
  year = {2011},
  isbn = {978-0-8218-5320-7},
  doi = {10.1090/pcms/018/09},
  url = {https://doi.org/10.1090/pcms/018/09},
}

@article{Ulm16,
  author = {Douglas Ulmer},
  title = {{Conductors of $ \ell $-adic representations}},
  journal = {Proc. Amer. Math. Soc.},
  volume = {144},
  year = {2016},
  number = {6},
  pages = {2291--2299},
  issn = {0002-9939},
  doi = {10.1090/proc/12880},
  url = {https://doi.org/10.1090/proc/12880},
}

@book{VS06,
  author = {Gabriel Villa-Salvador},
  title = {{Topics in the theory of algebraic function fields}},
  series = {Mathematics: Theory \& Applications},
  publisher = {Birkh\"auser Boston, Inc., Boston, MA},
  year = {2006},
  pages = {xviii+652},
  isbn = {978-0-8176-4480-2; 0-8176-4480-6},
}

@book{Wei74,
  author = {Andr\'e Weil},
  title = {{Basic number theory}},
  series = {Die Grundlehren der mathematischen Wissenschaften, Band 144},
  edition = {Third},
  publisher = {Springer-Verlag, New York-Berlin},
  year = {1974},
  pages = {xviii+325},
}

\end{document}